\documentclass{article}
\usepackage{authblk}
\usepackage{graphicx} 
\usepackage{babel}
\usepackage{amsmath}
\usepackage{amsfonts}
\usepackage{amsthm}
\usepackage{amssymb}
\usepackage{hyperref}
\usepackage{enumitem}

\newtheorem{thm}{Theorem}[section]
\newtheorem{pro}[thm]{Proposition}
\newtheorem{cor}[thm]{Corollary}
\newtheorem{obs}[thm]{Observation}
\newtheorem{lem}[thm]{Lemma}
\theoremstyle{definition}
\newtheorem{defn}[thm]{Definition}

\newcommand{\rad}{\textup{rad}}

\title{Idempotence in Carmichael and Giuga numbers and its application to the Agoh-Giuga conjecture}
\author{Miguel Ángel López\thanks{migulo23@ucm.es}}
\affil{Complutense University of Madrid}
\date{}

\begin{document}

\maketitle

\begin{abstract}
This article analyzes the idempotence of Giuga numbers and Carmi\-chael numbers. The first sections will lead us to define an arithmetic function through which we will find a new characterization of Giu\-ga numbers. The following sections will lead us to generalize Carmi\-chael num\-bers, finding numbers that are Fermat pseudoprimes in certain specific cases, and to formulate various statements and open questions regarding their proper\-ties. Finally, we will find a new alternative condition, based on idempo\-tence, that a number must satisfy to be a counterexample to the Agoh– \-Giuga conjecture and which, therefore, presumably, no number will ever satisfy. We will almost always work under the premise that $n$ is square-free, since both numbers always have that property, but in certain situa\-tions we will extend some of the results to the general case.
\end{abstract}

\section{Preliminary results}
The problem of the powers of elements of $\mathbb{Z}_n$ is one that has significant impli\-cations in the field of number theory. One of the earliest results, and still one of the most important today, is Fermat’s Little Theorem \cite{daepp2011fermat}, which states that in $\mathbb{Z}_p,$ where $p$ is a prime number, all powers of exponent $p-1$ are divisible by $p$ if $1$ is subtracted from them first. The result was refined by Euler in his famous theorem \cite{shiu2007euler} using the function $\varphi,$ one of the most celebrated arithmetic functions in number theory.

The study of these powers, however, still raises many questions. For example, the Agoh–Giuga conjecture, stated by Takashi Agoh as a property related to Bernoulli numbers \cite{agoh1995giuga}, but formulated by Giuseppe Giuga as a property of powers in $\mathbb{Z}_n$ \cite{giuga1950presumibile}, states that a number $p$ is prime if and only if $$1^{p-1}+2^{p-1}+\cdots + (p-1)^{p-1} \equiv -1 \pmod{p}, $$ and while the implication from left to right is trivial, precisely by using Fermat’s Little Theorem, the implication from right to left remains unproven to this day, since it has not been possible to verify that, for every composite number $n,$ the formula does not hold.

At the beginning of this article, we will study sums of powers in $\mathbb{Z}_n$ for the case where the exponent is precisely $\varphi(n)$, where $$\varphi(n) = \#\{m < n | \gcd(m,n) = 1\},$$ that is, the number of natural numbers less than $n$ that are coprime to $n$. In the second section, we will analyze the case where $n$ is square-free to arrive at a series of results that we will generalize, in the third section, to the general case. This result is closely related to idempotence in $\mathbb{Z}_n$ and will allow us, in the fourth section, to characterize Giuga numbers in a new way not previously defined, as well as to define a new arithmetic function motivated by the results obtained. Thanks to this function and the previous results, in the fifth and final section we will study the idempotents of Giuga and Carmichael numbers and characterize the Agoh-Giuga conjecture in an alternative way. To this end, in this first section, we will define certain concepts and preliminary results.

Henceforth, we will work in the ring $\mathbb{Z}_n$ of integers modulo $n$. This ring is a field if and only if $n$ is a prime number, and the following result, mentioned earlier, holds in it:

\begin{thm} (Fermat's Little Theorem)
Let $p$ be a prime number, $a\in\mathbb{Z}_p$, and $a\neq 0$; then $a^{p-1} \equiv 1 \pmod{p}.$
\end{thm}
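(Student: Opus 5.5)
The plan is the classical permutation argument, which uses only that $\mathbb{Z}_p$ is a field when $p$ is prime. Fix $a\in\mathbb{Z}_p$ with $a\neq 0$ and consider the map $\mu_a:\mathbb{Z}_p^{*}\to\mathbb{Z}_p^{*}$ given by $x\mapsto ax$, where $\mathbb{Z}_p^{*}=\{1,2,\dots,p-1\}$ is the set of nonzero residues.

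The first step is to check that $\mu_a$ is well defined and injective. If $ax\equiv 0 \pmod p$ with $x\not\equiv 0$, then $p\mid ax$, and since $p$ is prime, Euclid's lemma forces $p\mid a$ or $p\mid x$, a contradiction. If $ax\equiv ay$, then $p\mid a(x-y)$, so again $p\mid x-y$. Since $\mathbb{Z}_p^{*}$ is finite, $\mu_a$ is therefore a bijection, and $\{a\cdot 1, a\cdot 2,\dots,a(p-1)\}$ is a rearrangement of $\{1,2,\dots,p-1\}$.

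The second step is to multiply all elements of both sets:
$$\prod_{x=1}^{p-1}(ax)\equiv\prod_{x=1}^{p-1}x \pmod p,\qquad\text{that is,}\qquad a^{p-1}(p-1)!\equiv (p-1)! \pmod p.$$
Finally, $(p-1)!$ is a product of nonzero residues, so by the same Euclid argument it is nonzero modulo $p$, and it is invertible because $\mathbb{Z}_p$ is a field. Cancelling it gives $a^{p-1}\equiv 1 \pmod p$.

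The only point needing care is this cancellation, that is, that $(p-1)!$ is a unit. This is exactly where primality is used, and it is the step that fails for composite moduli, motivating the later use of $\varphi(n)$ and units of $\mathbb{Z}_n$. As an alternative, one could apply Lagrange's theorem to the multiplicative group $\mathbb{Z}_p^{*}$ of order $p-1$: the order of $a$ divides $p-1$, hence $a^{p-1}=1$. I would present the elementary version to keep the section self-contained.
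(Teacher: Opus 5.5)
The paper does not prove this statement. It is recorded in the preliminaries as a classical result with a pointer to \cite{daepp2011fermat}, and then used as a black box, as is Euler's theorem, which is likewise only cited.

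Your permutation argument is correct and complete. Well-definedness and injectivity of $x\mapsto ax$ on the nonzero residues both follow from Euclid's lemma. Finiteness upgrades injectivity to bijectivity, and comparing products gives $a^{p-1}(p-1)!\equiv (p-1)!\pmod p$. For the final cancellation you do not need the fact that $\mathbb{Z}_p$ is a field: from $p\mid (a^{p-1}-1)(p-1)!$ and $p\nmid (p-1)!$, Euclid's lemma again gives $p\mid a^{p-1}-1$. So the whole proof rests on that single lemma.

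What your route buys over the paper's citation is self-containedness, and it generalizes verbatim. Multiplication by a unit $a$ permutes the reduced residue system $\{x : 1\le x\le n,\ \gcd(x,n)=1\}$, so the same argument yields $a^{\varphi(n)}\equiv 1\pmod n$. That is Theorem~\ref{euler}, the form the paper actually relies on throughout. The Lagrange alternative you mention is equally valid and is the group-theoretic packaging of the same idea.
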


Among other consequences, such as facilitating modular calculations on very large numbers, this theorem serves to analyze the structure of the subgroup generated by an element, as well as one of the most famous, though unsuccessful, attempts to characterize primality. Euler observed that the theorem could be generalized to $\mathbb{Z}_n,$ with $n$ composite, as follows:

\begin{thm}\label{euler} (Euler’s Theorem)
Let $n\in\mathbb{N}$, and let $a\in \mathbb{Z}_n$ such that $\gcd(a,n)=1$, then $a^{\varphi (n)} \equiv 1 \pmod{n}$, where $\varphi (n)$ is Euler’s $\varphi$-function.
\end{thm}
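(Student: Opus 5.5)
The plan is to use the standard group-theoretic argument in the multiplicative group of units of $\mathbb{Z}_n$, and to mention a direct counting variant as an alternative.

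First, I set $U_n=\{x\in\mathbb{Z}_n : \gcd(x,n)=1\}$. This set is closed under multiplication, since the product of two residues coprime to $n$ is coprime to $n$. Every element of $U_n$ has a multiplicative inverse, by Bézout's identity: if $\gcd(a,n)=1$, there are integers $u,v$ with $ua+vn=1$, so $ua\equiv 1 \pmod n$. Hence $U_n$ is a finite abelian group, and by the definition of $\varphi$ its order is exactly $\varphi(n)$.

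Next, I fix $a$ with $\gcd(a,n)=1$ and consider the map $x\mapsto ax$ on $U_n$. It sends $U_n$ into $U_n$. It is injective, because $ax\equiv ay$ gives $x\equiv y$ after multiplying by the inverse of $a$. Since $U_n$ is finite, the map is therefore a bijection. Writing $U_n=\{r_1,\dots,r_{\varphi(n)}\}$, it follows that
$$\prod_{i=1}^{\varphi(n)} (a r_i) \equiv \prod_{i=1}^{\varphi(n)} r_i \pmod n,$$
which means $a^{\varphi(n)} P\equiv P \pmod n$, where $P=\prod_i r_i$. The element $P$ lies in $U_n$ and is therefore invertible, so cancelling it gives $a^{\varphi(n)}\equiv 1 \pmod n$.

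Equivalently, one could invoke Lagrange's theorem: the order of $a$ in $U_n$ divides $|U_n|=\varphi(n)$. I prefer the bijection argument because it is self-contained.

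The only step needing care is establishing that $U_n$ is closed under multiplication and that its elements are invertible, so that the cancellation is legitimate. This is exactly where the hypothesis $\gcd(a,n)=1$ is used, and it follows from Bézout's identity. Everything else is routine.

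Note that Fermat's Little Theorem is recovered as the special case $n=p$, where $\varphi(p)=p-1$.
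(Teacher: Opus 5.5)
Your proof is correct and complete. It is the classical rearrangement argument, and each step is justified: closure of $U_n$ under multiplication, invertibility via B\'ezout, injectivity and hence bijectivity of $x\mapsto ax$ on the finite set $U_n$, and cancellation of $P\in U_n$.

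The paper does not prove this statement. It quotes Euler's theorem as a known classical result and uses it as a black box later, for instance combined with $\varphi(p_i^{r_i})\mid\varphi(n)$ to obtain $a^{\varphi(n)}\equiv 1 \pmod{p_i^{r_i}}$. There is therefore no paper proof to compare against; your argument supplies what the paper takes from the literature.

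On the two routes you mention: Lagrange's theorem works in any finite group. The rearrangement argument is more self-contained but needs commutativity of $U_n$ to identify the two products, which $U_n$ of course has.

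One small detail is the identification $|U_n|=\varphi(n)$ under the paper's definition $\varphi(n)=\#\{m<n : \gcd(m,n)=1\}$. For $n\geq 2$ the residue $0$ is excluded automatically, since $\gcd(0,n)=n\neq 1$. The case $n=1$ is trivial, because every congruence modulo $1$ holds.
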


These theorems have immense implications, for example in the field of crypto\-graphy, as they form the basis of the RSA algorithm. However, Euler’s theorem can be and has been extended to all elements of $\mathbb{Z}_n$, not just those that are prime to $n$. To do this, we first need the concept of an idempotent element, which has been extensively used in algebra and number theory for the study of the ring $\mathbb{Z}_n$, since it holds, as we recall here and as can be read in \cite{sibley2012idempotents}, that the ring $\mathbb{Z}_n$ always has more idempotents than the trivial ones when $n$ is a composite number.

\begin{defn}
Let $n\in\mathbb{N}$; we say that $a$ is an idempotent element of $\mathbb{Z}_n$ if $a^2 \equiv a \pmod{n}$. Thus, $0$ and $1$ are always idempotent elements for every ring $\mathbb{Z}_n.$ These two idempotents are always present and are considered trivial.
\end{defn}

The first interesting consequence that follows is the following, which was proven and can be found in \cite{vass2015generalization}, although here we provide a quick proof of the result.

\begin{thm}
Let $n\in\mathbb{N}$ and $a\in \mathbb{Z}_n$, then $a^{\varphi (n)} \equiv b \pmod{n}$, where $b$ is an idempotent element of $\mathbb{Z}_n.$
\end{thm}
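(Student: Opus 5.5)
We must show that $b = a^{\varphi(n)}$ satisfies $b^2 \equiv b \pmod{n}$, that is, $a^{2\varphi(n)} \equiv a^{\varphi(n)} \pmod{n}$. The plan is to reduce to prime powers via the Chinese Remainder Theorem and then split into two cases according to whether $p$ divides $a$.

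\textbf{Reduction.} Write $n = p_1^{e_1}\cdots p_k^{e_k}$. By the Chinese Remainder Theorem, $b^2 \equiv b \pmod n$ holds if and only if $b^2\equiv b \pmod{p_i^{e_i}}$ for every $i$. So fix a prime power $p^e \,\|\, n$.

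\textbf{Case $p \nmid a$.} Then $\gcd(a,p^e)=1$, and Euler's Theorem (Theorem \ref{euler}) applied modulo $p^e$ gives $a^{\varphi(p^e)}\equiv 1 \pmod{p^e}$. Since $\varphi$ is multiplicative, $\varphi(p^e)$ divides $\varphi(n)$, so $b = a^{\varphi(n)} \equiv 1 \pmod{p^e}$. Hence $b^2\equiv 1\equiv b$.

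\textbf{Case $p \mid a$.} Then $p^{\varphi(n)}$ divides $b$. It suffices to check $\varphi(n)\ge e$, since then $b\equiv 0 \pmod{p^e}$ and $b^2\equiv 0 \equiv b$. This inequality is the only step needing care. We have
$$\varphi(n)\ge \varphi(p^e)=p^{e-1}(p-1)\ge p^{e-1}\ge e$$
for $p\ge 2$, $e\ge 1$, using $2^{e-1}\ge e$, which follows by induction. The trivial case $n=1$ is immediate.

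Combining the two cases over all prime-power factors, $b$ is congruent to $0$ or $1$ modulo each $p_i^{e_i}$. By the Chinese Remainder Theorem, $b$ is therefore an idempotent of $\mathbb{Z}_n$. In fact this identifies $b$ as the unique idempotent that is $1$ modulo the prime powers not dividing $a$ and $0$ modulo the others. The main obstacle is only the exponent bound $\varphi(n)\ge e$ in the second case; everything else is a direct application of Euler's Theorem.
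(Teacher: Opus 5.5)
Your proof is correct and is essentially the paper's argument. For each prime power $p^e$ exactly dividing $n$, you use Euler's theorem with $\varphi(p^e)\mid\varphi(n)$ when $p\nmid a$, and the bound $e\le 2^{e-1}\le p^{e-1}(p-1)\le\varphi(n)$ when $p\mid a$, then glue the cases together with the Chinese Remainder Theorem.
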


\begin{proof}
Let $n=p_1^{r_1}\ldots p_k^{r_k}$ be the factorial decomposition of $n.$ We will consider two cases, depending on whether each $p_i$ divides $a$ or not. If we first assume that $p_i\mid a,$ then, since $$r_i=1+(r_i-1)\ \leq 2^{r_i-1}\leq p_i^{r_i-1}\leq p_i^{r_i-1}(p_i-1)\leq\varphi (n),$$ we have that $a^{\varphi (n)} \equiv 0 \pmod{p_i^{r_i}}$. On the other hand, if $p_i\nmid a,$ then by Euler’s Theorem (\ref{euler}) and the fact that $\varphi \left(p_i^{r_i}\right)\mid\varphi (n),$ we obtain $a^{\varphi (n)} \equiv 1 \pmod{p_i^{r_i}}$. Therefore, in both cases $a^{\varphi (n)}(a^{\varphi (n)}-1) \equiv 0 \pmod{p_i^{r_i}},$ and thus $b$ is idempotent in $\mathbb{Z}_n$.
\end{proof}

\begin{thm}
Let $n \in \mathbb{N}$ and $n = p_1^{r_1} \ldots p_k^{r_k}$; then $\mathbb{Z}_n$ has exactly $2^k$ idempotent elements, including the trivial elements $0$ and $1$.
\end{thm}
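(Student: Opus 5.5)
The plan is to reduce the count to the prime-power components via the Chinese Remainder Theorem and then count idempotents in each $\mathbb{Z}_{p_i^{r_i}}$ separately. The map $a \mapsto (a \bmod p_1^{r_1}, \ldots, a \bmod p_k^{r_k})$ is a ring isomorphism $\mathbb{Z}_n \cong \mathbb{Z}_{p_1^{r_1}} \times \cdots \times \mathbb{Z}_{p_k^{r_k}}$. Since $a^2 \equiv a \pmod{n}$ holds if and only if $a^2 \equiv a \pmod{p_i^{r_i}}$ for every $i$, an element of $\mathbb{Z}_n$ is idempotent exactly when each of its components is idempotent. It follows that the number of idempotents of $\mathbb{Z}_n$ equals the product of the numbers of idempotents of the factors $\mathbb{Z}_{p_i^{r_i}}$.

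The key local step is to show that $\mathbb{Z}_{p^r}$ has exactly two idempotents, $0$ and $1$. If $a^2 \equiv a \pmod{p^r}$, then $p^r \mid a(a-1)$. Because $\gcd(a, a-1) = 1$, the prime $p$ cannot divide both $a$ and $a-1$. Hence the entire power $p^r$ divides one of the two factors, which gives $a \equiv 0$ or $a \equiv 1 \pmod{p^r}$. Both of these are idempotent, and they are distinct since $p^r > 1$.

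Combining the two steps, each of the $k$ coordinates independently takes one of two values, $0$ or $1$. By the Chinese Remainder Theorem every such choice lifts to a unique element of $\mathbb{Z}_n$. This yields exactly $2^k$ idempotents, among them $0$ (all coordinates $0$) and $1$ (all coordinates $1$).

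There is no real obstacle here. The only point needing care is the coprimality argument in the local step: it uses that $p^r$ is a prime power, so that $p^r$ cannot split its prime factors between $a$ and $a-1$. That is exactly what fails for composite moduli with several distinct primes, and it is the source of the nontrivial idempotents.
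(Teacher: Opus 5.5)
Your proof is correct and follows the paper's route: the Chinese Remainder Theorem reduces the count to the factors $\mathbb{Z}_{p_i^{r_i}}$, where $a^2\equiv a \pmod{p_i^{r_i}}$ has only the solutions $0$ and $1$, giving $2^k$ idempotents. The only difference is that you prove this local step yourself via $\gcd(a,a-1)=1$, while the paper just cites it from the literature.
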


\begin{proof}
The proof is straightforward and can be seen in detail in \cite{vass2016composite}; it uses the fact that every congruence equation of the form $a^2 \equiv a \pmod{p_i^{r_i}}$ has only two solutions, $0$ and $1$, therefore each idempotent of $\mathbb{Z}_n$ is uniquely associated, via the Chinese Remainder Theorem, with a sequence $(a_1,\ldots ,a_k)$ where $a_i\in \{0,1\}$. Thus, counting all possibilities, the total is $2^k.$
\end{proof}

\begin{cor}
The number of idempotents in $\mathbb{Z}_n$ is equal to $2^{\omega (n)},$ where $\omega(n)$ is the function that returns the number of prime divisors of $n$ (one if $n$ is a prime number). Therefore, the only idempotents of $\mathbb{Z}_p,$ where $p$ is prime, are $0$ and $1$.
\end{cor}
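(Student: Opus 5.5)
The corollary follows from the preceding theorem once we identify the exponent $k$ in the factorization $n=p_1^{r_1}\cdots p_k^{r_k}$ with $\omega(n)$. By definition, $\omega(n)$ counts the distinct primes dividing $n$. In the prime factorization the $p_i$ are pairwise distinct and each appears with exponent $r_i\geq 1$, so they are exactly the distinct prime divisors of $n$. Hence $k=\omega(n)$, and the theorem gives $2^{\omega(n)}$ idempotents in $\mathbb{Z}_n$. Note that $\omega(n)$ ignores the multiplicities $r_i$, consistent with the fact that each local congruence $a^2\equiv a \pmod{p_i^{r_i}}$ has exactly two solutions regardless of $r_i$. For $n=1$ we have $k=0$ and $2^0=1$, matching the single element of the zero ring; this edge case needs no separate treatment if one wishes to include it.

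For the second assertion, take $n=p$ prime. Then $\omega(p)=1$, so $\mathbb{Z}_p$ has exactly $2^1=2$ idempotents. Since $0^2=0$ and $1^2=1$, and $0\not\equiv 1 \pmod{p}$, these two idempotents are distinct, so they must be exactly $0$ and $1$. As an independent check, which also covers prime powers, note that $\mathbb{Z}_p$ is a field, so $a^2=a$ means $a(a-1)=0$, and hence $a=0$ or $a=1$.

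There is no real obstacle here. The only point needing care is the identification $k=\omega(n)$, which rests on the uniqueness of the prime factorization and on the $p_i$ being distinct. Everything else is a direct specialization of the preceding theorem.
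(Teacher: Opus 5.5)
Your proof is correct and matches the paper, which treats this corollary as an immediate specialization of the preceding theorem via the identification $k=\omega(n)$ and then reads off the case $n=p$. One small slip is in your aside: the field argument does not ``also cover prime powers,'' since $\mathbb{Z}_{p^r}$ is not a field for $r>1$ (there you would instead use that $a$ and $a-1$ are coprime, so $p^r$ divides one of them), but your proof does not depend on that remark.
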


We will need some additional definitions and concepts, such as the orthogona\-lity of idempotent elements.

\begin{defn}
Let $n\in\mathbb{N}$, and let $a$ and $b$ be idempotents in $\mathbb{Z}_n$; then $a$ and $b$ are said to be orthogonal if $ab \equiv 0 \pmod n$.
\end{defn}

\begin{thm}\label{orta1-a}
Given an idempotent $a\in \mathbb{Z}_n$, we have that $1-a$ is also an idempotent and that $a$ and $1-a$ are orthogonal. In particular, all idempotents of $\mathbb{Z}_n$ are zero divisors in $\mathbb{Z}_n$ and therefore lack a multiplicative inverse in this same ring.
\end{thm}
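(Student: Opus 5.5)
The plan is a direct computation in $\mathbb{Z}_n$ using only the defining relation $a^2\equiv a \pmod n$.

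\textbf{Step 1.} I will show that $1-a$ is idempotent by expanding
$$(1-a)^2 = 1-2a+a^2 \equiv 1-2a+a = 1-a \pmod n.$$

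\textbf{Step 2.} I will show that $a$ and $1-a$ are orthogonal by computing
$$a(1-a)=a-a^2\equiv 0 \pmod n.$$

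\textbf{Step 3 (the ``in particular'' part).} This is the only delicate point, and the statement must be read with the right hypothesis. As written it fails for the trivial idempotent $1$, which is invertible. So I will prove it for idempotents $a\not\equiv 0,1 \pmod n$, and treat $a=0$ separately: $0$ is not invertible when $n>1$.

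For $a\not\equiv 0,1$, both $a$ and $1-a$ are nonzero in $\mathbb{Z}_n$. Their product is $0$ by Step 2, so $a$ is a zero divisor.

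\textbf{Step 4.} To show that such an $a$ has no inverse, I will give two arguments.

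The first is general. Suppose $ab\equiv 1$. Multiplying $a(1-a)\equiv 0$ by $b$ gives $1-a\equiv 0$, i.e. $a\equiv 1$, a contradiction.

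The second is direct from idempotence. Suppose $ab\equiv 1$. Then
$$a\equiv a\cdot(ab)=a^2b\equiv ab\equiv 1,$$
again a contradiction.

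\textbf{Expected obstacle.} Steps 1 and 2 are routine. The only point requiring care is the precise scope of the final claim: it holds only for idempotents other than $1$ (and, for the zero-divisor part, other than $0$). I will state this restriction explicitly so that the last assertion is correct.
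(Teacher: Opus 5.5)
Your Steps 1 and 2 are exactly the paper's proof, which consists only of the two computations $a(1-a)\equiv a^2-a\equiv 0$ and $(1-a)^2\equiv 1+a^2-2a\equiv 1-a \pmod{n}$. The paper gives no argument for the ``in particular'' clause, and your restriction to idempotents other than $1$ (and other than $0$ for the zero-divisor claim) is a needed correction, since $1$ is an invertible idempotent; both of your non-invertibility arguments are valid.
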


\begin{proof}
Orthogonality is evident simply by doing the math, since $a(1-a) \equiv a^2 - a \equiv a - a \equiv 0 \pmod{n}$. It remains to be shown that $1-a$ is indeed an idempotent in $\mathbb{Z}_n$, but this is also straightforward since $(1-a)^2 \equiv 1+a^2-2a \equiv 1+a-2a \equiv 1-a \pmod{n}.$
\end{proof}

\section{Idempotence and the sum of powers, square-free case}

Now that these concepts have been defined, we can consider the first results. In this section, we will assume that $n$ is square-free and write its factorization as $n = p_1 \ldots p_k$. We will therefore have a total of $2^k$ idempotents in $\mathbb{Z}_n.$ It is important to note that, when we later apply results to Giuga and Carmichael numbers, the formulas in this section will suffice, since both types of numbers are square-free.

One of the fundamental results of this article, which will allow us to charac\-terize Giuga numbers in a new way, is the following.

\begin{thm}\label{ilc}
Let $n\in\mathbb{N}$ be a square-free number, the following holds:

\begin{enumerate}
\item The idempotent elements of $\mathbb{Z}_n$ are exactly those of the form $d^{\varphi (n)},$ where $d\mid n.$
\item The equation $$x^{\varphi (n)}\equiv \left(\frac{n}{d_0}\right)^{\varphi (n)}\pmod{n}$$ has exactly $\varphi (d_0)$ solutions in $\mathbb{Z}_n$.
\item $\sum_{a\in \mathbb{Z}_n}{a^{\varphi (n)}}\equiv\sum_{d\mid n}{\varphi (d)\cdot \left(\frac{n}{d}\right)^{\varphi (n)}}\pmod{n}.$
\end{enumerate}

\end{thm}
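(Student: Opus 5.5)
The whole argument runs through the Chinese Remainder Theorem. Since $n=p_1\cdots p_k$ is square-free, the proof of the earlier theorem on $a^{\varphi(n)}$ shows that for every $a\in\mathbb{Z}_n$ the power $a^{\varphi(n)}$ is, modulo each $p_i$, equal to $0$ if $p_i\mid a$ and to $1$ if $p_i\nmid a$. The exponent $\varphi(n)\geq 1$ ensures the $0$ case, and the $1$ case is Euler/Fermat, using $p_i-1\mid\varphi(n)$. Hence $a^{\varphi(n)}$ is the unique idempotent $e_D$ that is $0$ modulo the primes in $D=\gcd(a,n)$ and $1$ modulo the remaining primes. The key observation, which I will state and use throughout, is therefore
\[
a^{\varphi(n)}\equiv \gcd(a,n)^{\varphi(n)}\pmod n .
\]
(For $a=0$ this reads $0\equiv n^{\varphi(n)}$.) I will treat $\mathbb{Z}_n$ as $\{0,1,\dots,n-1\}$, or equivalently $\{1,\dots,n\}$.

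For part 1, the map $D\mapsto e_D=D^{\varphi(n)}$ from divisors of $n$ to idempotents is injective, because different sets of primes give different residue patterns. There are $2^k$ divisors, and the earlier theorem says there are exactly $2^k$ idempotents. So the idempotents are exactly the elements $d^{\varphi(n)}$ with $d\mid n$. Equivalently, every idempotent is $0/1$ modulo each $p_i$, and choosing $d$ to be the product of the primes where it is $0$ realises it as $d^{\varphi(n)}$.

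For part 2, by the key observation and injectivity, $x^{\varphi(n)}\equiv (n/d_0)^{\varphi(n)}$ holds if and only if $\gcd(x,n)=n/d_0$. Counting $x\in\{1,\dots,n\}$ with $\gcd(x,n)=n/d_0$ is the classical count: write $x=(n/d_0)y$ with $1\le y\le d_0$ and $\gcd(y,d_0)=1$, which gives $\varphi(d_0)$ solutions. Here square-freeness is used again to see that $\gcd(x,n)=n/d_0$ is equivalent to $\gcd(x/(n/d_0),d_0)=1$; in fact this holds in general for this count.

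For part 3, I partition $\mathbb{Z}_n$ according to the value of $\gcd(a,n)$. As $d$ runs over the divisors of $n$, so does $n/d$, and the class with $\gcd(a,n)=n/d$ has $\varphi(d)$ elements, each contributing $(n/d)^{\varphi(n)}$. Summing gives the formula; as a check, $\sum_{d\mid n}\varphi(d)=n$.

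The only delicate step is the key observation, specifically the injectivity and the claim that the pattern of $a^{\varphi(n)}$ depends only on $\gcd(a,n)$. This is where square-freeness matters. Without it, $a$ and $\gcd(a,n)$ still give the same residue pattern, but distinct divisors such as $p$ and $p^2$ collapse to the same idempotent. Part 1 would then still hold as a set statement, but the counts in parts 2 and 3 would need to be regrouped. Everything else is routine.
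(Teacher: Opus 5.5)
Your proof is correct. Part 1 matches the paper's argument: injectivity of $d\mapsto d^{\varphi(n)}$ via the $0/1$ pattern modulo each $p_i$, plus the count of $2^k$ idempotents. Parts 2 and 3 go a different way.

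The paper counts solutions of $x^{\varphi(n)}\equiv(n/d_0)^{\varphi(n)}$ locally: one residue modulo each $p_i\mid n/d_0$ and $p_i-1$ residues modulo each $p_i\mid d_0$. It multiplies these via the Chinese Remainder Theorem, and then groups the sum in part 3 by idempotent value using that count. You instead prove $a^{\varphi(n)}\equiv\gcd(a,n)^{\varphi(n)}\pmod n$. This identifies each class explicitly as $\{x:\gcd(x,n)=n/d_0\}$ and reduces part 2 to the classical count underlying $\sum_{d\mid n}\varphi(d)=n$.

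Your identity also holds for every $n$, not only square-free ones. Modulo each prime power $p^r$ exactly dividing $n$, both sides vanish when $p\mid a$ because $\varphi(n)\ge\varphi(p^r)\ge r$, and both equal $1$ otherwise by Euler. Hence your part-3 argument, partitioning $\mathbb{Z}_n$ by $\gcd(a,n)$, proves Theorem~\ref{inlc} verbatim, where the paper needs a separate prime-by-prime verification. Your closing caveat is therefore slightly too cautious: for non-square-free $n$ only the count in part 2 must be regrouped, and the formula in part 3 survives unchanged.

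The paper's local count, for its part, needs no gcd bookkeeping and matches the computations it reuses in Corollary~\ref{ilc2}.
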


\begin{proof}

We know that if $n = p_1 \ldots p_k$, then $\mathbb{Z}_n$ has $2^k$ idempotents. We know that every element $a$ of $\mathbb{Z}_n$ that satisfies $a^{\varphi (n)}$ is idempotent; therefore, in particular, $\left(\frac{n}{d}\right)^{\varphi (n)}$ is idempotent, which is equivalent to saying that $d^{\varphi (n)}$ is idempotent. Furthermore, there are $2^k$ idempotents of this form, since they correspond to different ways of grouping divisors of $n$, which in total are $$\binom{n}{0}+\binom{n}{1}+\ldots \binom{n}{k}=2^k.$$ It suffices, therefore, to show that all these idempotents are distinct from one another.

Suppose that two of them are congruent, that is, $$d^{\varphi (n)}\equiv e^{\varphi (n)}\pmod{n},$$ with $d,e\mid n.$ Since $d \neq e,$ there exists, without loss of generality, at least one prime divisor $q$ of $d$ such that $q$ does not divide $e. $ Then $d\equiv 0 \pmod{q}$ and therefore $d^{\varphi (n)}\equiv 0 \pmod{q}.$ On the other hand, $\gcd(q,e)=1$ and by \ref{euler}, $e^{\varphi(q)} \equiv 1 \pmod{q}.$ Since $q \mid n$, we have that $\varphi(q) \mid \varphi(n)$ and $e^{\varphi(n)} \equiv 1 \pmod{q}. $ But we assumed that $d^{\varphi (n)}\equiv e^{\varphi (n)}\pmod{n},$ and therefore $d^{\varphi (n)}\equiv 1 \pmod{q},$ which is impossible. Therefore, $d^{\varphi (n)}\not\equiv e^{\varphi (n)}\pmod{n}.$

For the second part, we consider the equation $$x^{\varphi (n)}\equiv \left(\frac{n}{d_0}\right)^{\varphi (n)}\pmod{n}.$$ Suppose that $\frac{n}{d_0}=q_1\ldots q_{k_0}$ with $q_j\in\{p_1\ldots p_k\} .$ We have that $\left(\frac{n}{d}\right)^{\varphi (n)}$ is congruent to $0$ in $\mathbb{Z}_{p_i}$ if $p_i$ is equal to some $q_j$ and to $1$ otherwise, using the same reasoning as before. In the first case, the equation $$x^{\varphi (n)}\equiv \left(\frac{n}{d_0}\right)^{\varphi (n)}\pmod{p_i}$$ has a single solution; in the second case, we have that $$x^{\varphi (n)}\equiv x^{\varphi (p_i)\ \varphi\left(\frac{n}{p_i}\right)}\equiv 1\pmod{p_i}$$ using Fermat’s Little Theorem, and therefore, since $p_i$ is a prime number, there are exactly $p_i-1=\varphi (p_i)$ solutions. Applying this reasoning for each $p_i$, the equation has a number of solutions equal to $$\prod_{p_i\nmid\frac{n}{d_0}}\varphi (p_i)=\prod_{p_i\mid d_0}\varphi (p_i)=\varphi (d_0).$$

For the last part, in the previous section it was stated that every element $a$ of $\mathbb{Z}_n$ satisfies that $a^{\varphi (n)}$ is idempotent. Therefore, there exists a divisor $d$ of $n$ such that $$a^{\varphi (n)}\equiv\left(\frac{n}{d}\right)^{\varphi (n)}\pmod{n},$$ and there are $\varphi (d)$ of them in total. Since $a$ cannot generate two distinct idempotents, and $a^{\varphi (n)}\equiv 0 \pmod{n}$ if and only if $a\equiv 0 \pmod{n}$ because $n$ is square-free, it follows that $$\sum_{a\in \mathbb{Z}_n}{a^{\varphi (n)}}\equiv\sum_{d\mid n}{\varphi (d)\cdot \left(\frac{n}{d}\right)^{\varphi (n)}}\pmod{n}.$$
\end{proof}

\begin{cor}
If we define on $\mathbb{Z}_n$ the relation $a\sim b \iff a^{\varphi (n)}\equiv b^{\varphi (n)}\pmod{n},$ then this relation is trivially an equivalence relation and defines a quotient group, which we have shown to be precisely the group of idempotents of $\mathbb{Z}_n. $ Each class of this group can be represented as $\left[\left(\frac{n}{d}\right)^{\varphi (n)}\right],$ with $d\mid n.$ Furthermore, each class has exactly $\varphi (d)$ elements of $\mathbb{Z}_n.$
\end{cor}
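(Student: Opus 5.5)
The plan is to read the corollary off Theorem \ref{ilc}. First, the equivalence relation. The relation $a\sim b \iff a^{\varphi(n)}\equiv b^{\varphi(n)}\pmod n$ is the kernel relation of the map $f:\mathbb{Z}_n\to\mathbb{Z}_n$, $f(a)=a^{\varphi(n)}$. Hence reflexivity, symmetry and transitivity are inherited from equality in $\mathbb{Z}_n$, and the classes are exactly the nonempty fibres of $f$. The map $f$ is multiplicative, so $a\sim a'$ and $b\sim b'$ give $ab\sim a'b'$. Therefore $\sim$ is a congruence for multiplication, and the quotient inherits a commutative monoid structure. (I will interpret ``quotient group'' as this multiplicative structure, since the idempotents form a monoid under multiplication, not a group.)

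Second, I identify the quotient with the set of idempotents. By the theorem preceding Theorem \ref{ilc} (that $a^{\varphi(n)}$ is always idempotent), the image of $f$ lies in the idempotents. Conversely, part 1 of Theorem \ref{ilc} says every idempotent has the form $d^{\varphi(n)}$ with $d\mid n$, so it lies in the image. Thus $f$ induces a bijection from $\mathbb{Z}_n/\!\sim$ onto the set of idempotents, and it respects products because $f$ is multiplicative. As $d$ runs over the divisors of $n$, so does $n/d$, so the same part 1 shows every class is $\left[\left(\frac{n}{d}\right)^{\varphi(n)}\right]$ for a unique $d\mid n$. Uniqueness comes from the distinctness argument inside the proof of part 1.

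Third, the class sizes. The class of $\left(\frac{n}{d}\right)^{\varphi(n)}$ is exactly the solution set of $x^{\varphi(n)}\equiv\left(\frac{n}{d}\right)^{\varphi(n)}\pmod n$. By part 2 of Theorem \ref{ilc} with $d_0=d$, this set has $\varphi(d)$ elements. As a consistency check, summing over $d\mid n$ gives $\sum_{d\mid n}\varphi(d)=n$ elements in total, as it should.

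The only real obstacle is the wording ``quotient group.'' I would handle it by proving that $\sim$ is compatible with multiplication, and that the induced bijection with the idempotents is a monoid isomorphism. Everything else is a direct citation of Theorem \ref{ilc}.
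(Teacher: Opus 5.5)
Your proposal is correct and is essentially the paper's own argument: the paper gives no separate proof and reads the corollary directly off parts 1 and 2 of Theorem \ref{ilc}, under the section's standing square-free hypothesis, which you inherit by citing that theorem. Your point that the multiplicative quotient is a monoid rather than a group, together with your check that $f$ induces a monoid isomorphism onto the idempotents, is a legitimate sharpening of the paper's loose wording.
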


We have performed this summation in a manner similar to how the expected value of a probability distribution is calculated, that is, by grouping the out\-comes of the variable, which are the idempotents, with their frequencies, which are the different values of $\varphi(d)$, where $d$ is a divisor of $n$. To do this, it has been fundamental to our reasoning that $n$ is square-free and, therefore, the only nilpotent in $\mathbb{Z}_n$ is zero. In the case of an arbitrary $n$, the existence of non-trivial nilpotents will be an additional obstacle we will have to deal with.

A consequence of the previous theorem is that it immediately shows that $d^{\varphi (n)}$ and $\left(\frac{n}{d}\right)^{\varphi (n)}$ are orthogonal. Given that $a$ and $1-a$ are also always orthogonal idempotents if $a$ is one, this leads us to wonder whether in this case we have uniqueness and both coincide. That is not true in both directions, as we will see later. That is, $a$ and $1-a$ are always orthogonal, but $a$ may have other orthogonal elements that do not take that specific form. However, the intuition that there exist idempotents that are both orthogonal and sum to 1 in $\mathbb{Z}_n$ leads us to a very simple but slightly more general result.

\begin{thm}\label{ab1}
Let $n=ab,$ with $\gcd(a,b)=1.$ Then $a^{\varphi (n)}+b^{\varphi (n)}\equiv 1 \pmod{n}. $ In particular, if $n$ is square-free, $d^{\varphi (n)}+\left(\frac{n}{d}\right)^{\varphi (n)}\equiv 1 \pmod{n}$ for every divisor $d$ of $n$.
\end{thm}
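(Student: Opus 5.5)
We argue prime power by prime power and then glue the pieces with the Chinese Remainder Theorem. Write $n=p_1^{r_1}\cdots p_k^{r_k}$. Since $\gcd(a,b)=1$, each prime power $p_i^{r_i}$ divides exactly one of $a$ and $b$. It therefore suffices to show that $a^{\varphi(n)}+b^{\varphi(n)}\equiv 1 \pmod{p_i^{r_i}}$ for each $i$. By symmetry we may assume $p_i^{r_i}\mid a$, so that $p_i\nmid b$.

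For the first term, reuse the inequality chain from the proof of the theorem stating that $a^{\varphi(n)}$ is idempotent:
$$r_i\le 2^{r_i-1}\le p_i^{r_i-1}(p_i-1)\le\varphi(n).$$
Since $p_i\mid a$, this gives $a^{\varphi(n)}\equiv 0\pmod{p_i^{r_i}}$.

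For the second term, $\gcd(b,p_i^{r_i})=1$, so Euler's Theorem (\ref{euler}) applies. Multiplicativity of $\varphi$ gives $\varphi(p_i^{r_i})\mid\varphi(n)$, and hence $b^{\varphi(n)}\equiv 1\pmod{p_i^{r_i}}$. Adding the two congruences gives $a^{\varphi(n)}+b^{\varphi(n)}\equiv 1\pmod{p_i^{r_i}}$.

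The congruence holds modulo every $p_i^{r_i}$, and these moduli are pairwise coprime with product $n$. By the Chinese Remainder Theorem it therefore holds modulo $n$.

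For the square-free particular case, take any divisor $d\mid n$ and set $a=d$, $b=n/d$. Because $n$ is square-free, no prime divides both $d$ and $n/d$, so $\gcd(d,n/d)=1$ and the general statement applies directly.

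No step is a serious obstacle. The only point needing care is the exponent bound $r_i\le\varphi(n)$ in the non-square-free case, which ensures that a multiple of $p_i$ raised to the power $\varphi(n)$ vanishes modulo $p_i^{r_i}$. This bound was already established earlier in the paper.
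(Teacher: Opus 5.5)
Your proof is correct and is essentially the paper's argument: one summand vanishes, the other is $1$ by Euler, and the pieces are glued by the Chinese Remainder Theorem. The paper works directly modulo the two coprime factors $a$ and $b$, using $b^{\varphi(n)}=\left(b^{\varphi(a)}\right)^{\varphi(b)}\equiv 1 \pmod{a}$, instead of splitting into prime powers. Note also that the exponent bound $r_i\le\varphi(n)$, which you flag as the delicate point, is not needed: since you already have $p_i^{r_i}\mid a$, the congruence $a^{\varphi(n)}\equiv 0\pmod{p_i^{r_i}}$ holds for any positive exponent.
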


\begin{proof}
On the one hand, we have that $$a^{\varphi (n)}+b^ {\varphi (n)}\equiv b^{\varphi (n)} \equiv b^{\varphi (ab)} \equiv \left(b^{\varphi (a)}\right)^{\varphi (b)} \equiv 1 \pmod{a},$$ using \ref{euler} and the fact that $a,b$ are relatively prime. Similarly, $a^{\varphi (n)}+b^{\varphi (n)}\equiv 1 \pmod{b}.$ This system has a unique solution modulo $n$ by the Chinese Remainder Theorem, and therefore we must have $a^{\varphi (n)}+b^{\varphi (n)}\equiv 1 \pmod{n}.$
\end{proof}

The specific case of this last theorem is proven in \cite{isham2020arithmetic}, in Corollary 3.12. The theorem in this article is more general, as it is proven for any pair of values $a,b$ such that $n$ can be factored into a product of both and they have no common factors. This holds for orthogonal idempotents, but as we have seen, it is not a property they possess exclusively. The values of $a$ and $b$ are called unitary divisors, and are those divisors $d$ such that $\gcd \left( d,\frac{n}{d}\right)=1$. This property, in fact, can be generalized to products of more than two divisors that are pairwise coprime.

\begin{cor}
Using the same reasoning, it can be shown that if we take $n=a_1 \ldots a_k$ such that $\gcd(a_i,a_j)=1$ for all $1\leq i,j \leq k$, then $\sum_{i=1}^k{{a_i}^{\varphi(n)}}\equiv k-1 \pmod{n}.$
\end{cor}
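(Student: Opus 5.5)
The plan is to imitate the proof of Theorem \ref{ab1}: reduce modulo each factor $a_j$, compute the sum there, and then glue the congruences together with the Chinese Remainder Theorem. The hypothesis $\gcd(a_i,a_j)=1$ is of course meant for $i\neq j$. We may also assume every $a_i>1$. Indeed, if some $a_i=1$, then $1^{\varphi(n)}=1$ contributes exactly $1$ to both sides of the relation $\sum_i a_i^{\varphi(n)} = (k-1)+\ldots$, so trivial factors can be discarded, or handled directly.

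Fix an index $j$ and work modulo $a_j$. The term $a_j^{\varphi(n)}$ is congruent to $0 \pmod{a_j}$, since $\varphi(n)\geq 1$. Now take any $i\neq j$. Then $\gcd(a_i,a_j)=1$, so Euler's Theorem (\ref{euler}) gives $a_i^{\varphi(a_j)}\equiv 1 \pmod{a_j}$. Next we need $\varphi(a_j)\mid\varphi(n)$. Write $n=a_j m$ with $m=\prod_{i\neq j}a_i$. Since $\gcd(a_j,m)=1$, multiplicativity gives $\varphi(n)=\varphi(a_j)\varphi(m)$, which yields the divisibility. Hence $a_i^{\varphi(n)}\equiv 1\pmod{a_j}$ for every $i\neq j$. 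Summing over all $i$ then gives
$$\sum_{i=1}^k a_i^{\varphi(n)}\equiv 0+(k-1)\cdot 1\equiv k-1 \pmod{a_j}.$$

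Both $\sum_i a_i^{\varphi(n)}$ and $k-1$ therefore satisfy the same system of congruences modulo $a_1,\ldots,a_k$. The moduli are pairwise coprime and their product is $n$. By the Chinese Remainder Theorem, this system has a unique solution modulo $n$, so $\sum_{i=1}^k a_i^{\varphi(n)}\equiv k-1\pmod{n}$.

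No step is a genuine obstacle. The only points that need care are justifying $\varphi(a_j)\mid\varphi(n)$ through multiplicativity with the complementary coprime factor, and making sure the term $a_j^{\varphi(n)}$ really vanishes modulo $a_j$. The latter would fail only if $a_j=1$, and that case is removed by the reduction above.
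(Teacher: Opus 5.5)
Your proof is correct and follows essentially the same approach as the paper's proof of Theorem \ref{ab1}, which is the reasoning the corollary invokes: reduce modulo each $a_j$, use Euler's theorem with $\varphi(a_j)\mid\varphi(n)$ to get $k-1$ there, and glue the congruences with the Chinese Remainder Theorem. Discarding the factors $a_i=1$ is harmless but unnecessary, since every congruence modulo $1$ holds trivially, so your argument would also go through without that reduction.
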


\begin{cor}\label{semiprimo}
If $n$ is a semiprime, then $\sum_{i=1}^n{{i}^{\varphi(n)}} \equiv \varphi (n)-1 \pmod{n}.$
\end{cor}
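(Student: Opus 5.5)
The plan is to read the statement with $n=pq$, $p\neq q$ distinct primes, so that $n$ is square-free and Theorem \ref{ilc} applies. If $n=p^2$ were allowed, the same count gives $\varphi(n)$ rather than $\varphi(n)-1$: every unit contributes $1$ and every multiple of $p$ contributes $0$, because $\varphi(p^2)=p(p-1)\geq 2$. So this reading is necessary.

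First, the sum $\sum_{i=1}^n i^{\varphi(n)}$ runs over a complete residue system, with $i=n$ playing the role of $0$. It therefore equals $\sum_{a\in\mathbb{Z}_n}a^{\varphi(n)}$. By part 3 of Theorem \ref{ilc} this is
$$\sum_{d\mid n}\varphi(d)\left(\frac{n}{d}\right)^{\varphi(n)}.$$
With $n=pq$ there are four divisors, and I will evaluate their terms one at a time:
\begin{itemize}
\item $d=1$ gives $\varphi(1)\,n^{\varphi(n)}\equiv 0 \pmod n$.
\item $d=n$ gives $\varphi(n)\cdot 1^{\varphi(n)}=\varphi(n)$.
\item $d=p$ and $d=q$ give $(p-1)q^{\varphi(n)}+(q-1)p^{\varphi(n)}$.
\end{itemize}

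It therefore remains to show that
$$(p-1)q^{\varphi(n)}+(q-1)p^{\varphi(n)}\equiv -1 \pmod n.$$
I will check this modulo $p$ and modulo $q$ separately, as in the proof of Theorem \ref{ab1}. Modulo $p$, we have $p^{\varphi(n)}\equiv 0$, and $q^{\varphi(n)}\equiv 1$ by Euler's Theorem (\ref{euler}) since $\varphi(p)\mid\varphi(n)$. The expression thus reduces to $p-1\equiv -1 \pmod p$. By symmetry it is $\equiv -1 \pmod q$. The Chinese Remainder Theorem then gives $\equiv -1 \pmod n$. Adding the terms yields $\varphi(n)-1$.

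An alternative route is to use Theorem \ref{ab1}, $p^{\varphi(n)}+q^{\varphi(n)}\equiv 1$, to rewrite the expression as $p\,q^{\varphi(n)}+q\,p^{\varphi(n)}-1$, and then note that each product $p\,q^{\varphi(n)}$ and $q\,p^{\varphi(n)}$ is divisible by both $p$ and $q$, hence by $n$.

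There is no real obstacle here. The only delicate points are making the square-free hypothesis explicit and correctly identifying $i=n$ with $0\in\mathbb{Z}_n$.
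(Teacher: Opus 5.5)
Your proof is correct and is essentially the paper's. You expand via part 3 of Theorem \ref{ilc} over the four divisors of $pq$ and then show $(p-1)q^{\varphi(n)}+(q-1)p^{\varphi(n)}\equiv -1 \pmod n$; your alternative route (Theorem \ref{ab1} together with $p\,q^{\varphi(n)}\equiv q\,p^{\varphi(n)}\equiv 0 \pmod n$) is exactly what the paper does, and your main route simply inlines the CRT check that proves Theorem \ref{ab1}. Your insistence on $p\neq q$ is also right and is only implicit in the paper: for $n=4$ one gets $\sum_{i=1}^{4} i^{2}=30\equiv 2=\varphi(4)\not\equiv \varphi(4)-1 \pmod 4$.
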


\begin{proof}
If we consider $n=pq,$ in particular it is square-free, by Theorem \ref{ilc}, $$\sum_{i=1}^n{i^{\varphi(n)}}\equiv\varphi (1)n^{\varphi (n)}+\varphi (p)q^{\varphi (n)}+\varphi (q)p^{\varphi (n)}+\varphi (n)1^{\varphi (n)}\equiv$$ $$\equiv\varphi (p)q^{\varphi (n)}+\varphi (q)p^{\varphi (n)}+\varphi (n) \equiv \varphi (n)-q^{\varphi (n)}-p^{\varphi (n)} \pmod{n}. $$ By result \ref{ab1}, $p^{\varphi (n)}+q^{\varphi (n)} \equiv 1 \pmod{n}$ and therefore we have proven it.
\end{proof}

\begin{cor}\label{ilc2}
If $n=p_1\ldots p_k$ is square-free, then $$\sum_{i=1}^n{i^{\varphi (n)}}\equiv \sum_{p_i\mid n}{-\frac{n}{p_i}}\pmod{n}.$$
\end{cor}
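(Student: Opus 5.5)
The plan is to work prime by prime and use the Chinese Remainder Theorem. This avoids manipulating the divisor sum of Theorem \ref{ilc} directly. Write $n=p_1\ldots p_k$ and set
$$S=\sum_{i=1}^n i^{\varphi(n)}, \qquad T=-\sum_{j=1}^k \frac{n}{p_j}.$$
Because $n$ is square-free, it suffices to show $S\equiv T \pmod{p_i}$ for each $i$.

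\textbf{Reducing $S$ modulo $p_i$.} Since $\varphi(p_i)=p_i-1$ divides $\varphi(n)$, Fermat's Little Theorem gives $x^{\varphi(n)}\equiv 1 \pmod{p_i}$ whenever $p_i\nmid x$. When $p_i\mid x$ we get $x^{\varphi(n)}\equiv 0 \pmod{p_i}$, because $\varphi(n)\geq 1$. As $i$ runs from $1$ to $n$, exactly $n/p_i$ terms are divisible by $p_i$. Therefore
$$S\equiv n-\frac{n}{p_i}\equiv -\frac{n}{p_i}\pmod{p_i}.$$

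\textbf{Reducing $T$ modulo $p_i$.} For $j\neq i$, the prime $p_i$ divides $n/p_j$, so that term vanishes modulo $p_i$. The only surviving term gives $T\equiv -\frac{n}{p_i}\pmod{p_i}$. The two residues agree for every $i$, and CRT then yields the congruence modulo $n$.

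\textbf{Alternative via the paper's machinery.} One could instead start from Theorem \ref{ilc}(3) and evaluate the idempotent $(n/d)^{\varphi(n)}$ as the CRT indicator of the primes dividing $d$. Exchanging the order of summation, the coefficient of the indicator of $p_i$ becomes $\sum_{d:\,p_i\mid d}\varphi(d)=(p_i-1)\frac{n}{p_i}$. Its residue modulo $p_i$ is $-\frac{n}{p_i}$, which gives the same conclusion.

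The only delicate point is the case $n=1$ or $n$ prime, where the sum is read with the usual conventions. For $n=p$ the statement reads $S\equiv -1 \pmod p$, which is exactly Fermat's Little Theorem, so the argument above still covers it. I do not expect a genuine obstacle; the main care needed is making sure the count $n/p_i$ of multiples of $p_i$ in $\{1,\ldots,n\}$ is used correctly.
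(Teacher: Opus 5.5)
Your proof is correct. Your main argument takes a different route from the paper's, while your ``alternative'' paragraph is essentially the paper's own proof.

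The paper starts from Theorem~\ref{ilc}(3) and reduces each idempotent $(n/d)^{\varphi(n)}$ modulo $p_i$ to $0$ or $1$. It then evaluates
$$\sum_{p_i\mid d\mid n}\varphi(d)=\varphi(p_i)\sum_{d\mid n/p_i}\varphi(d)=(p_i-1)\tfrac{n}{p_i}$$
using the identity $\sum_{d\mid m}\varphi(d)=m$.

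Your main argument instead reduces the power sum itself modulo $p_i$, by counting the $n/p_i$ multiples of $p_i$ in $\{1,\ldots,n\}$. So it needs only Fermat's Little Theorem and CRT. It does not use the divisor-sum formula, the classification of idempotents, or the $\varphi$-divisor-sum identity, which makes the corollary independent of Theorem~\ref{ilc}. It is also exactly the computation the paper later uses for the left-hand side in the proof of Theorem~\ref{inlc}.

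Your route extends to arbitrary $n$ if you work modulo $p_i^{r_i}$ instead of $p_i$. Since $\varphi(n)\geq r_i$, multiples of $p_i$ still contribute $0$ modulo $p_i^{r_i}$, and the terms $n/p_j$ with $j\neq i$ vanish modulo $p_i^{r_i}$. In the non-square-free case, agreement modulo the primes $p_i$ alone would not be enough for CRT.

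What the paper's route buys is conceptual. It shows that the weighted sum of idempotents $\sum_{d\mid n}\varphi(d)\,(n/d)^{\varphi(n)}$ collapses to $-\sum_{p\mid n} n/p$. That links the idempotent decomposition to the Carlitz--von Staudt form, which is the viewpoint the paper builds on when it introduces $G_k$.
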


\begin{proof}
We know from Theorem \ref{ilc} that $$\sum_{a\in \mathbb{Z}_n}{a^{\varphi (n)}}=\sum_{d\mid n}{\varphi (d)\cdot \left(\frac{n}{d}\right)^{\varphi (n)}}.$$ Let’s compute this sum modulo $p_i. $ We have that if $p_i\mid\frac{n}{d}$ then $\left(\frac{n}{d}\right)^{\varphi (n)} \equiv 0 \pmod{p_i},$ and if $p_i \nmid\frac{n}{d}$, then $\left(\frac{n}{d}\right)^{\varphi (n)} \equiv 1 \pmod{p_i}.$ Therefore

$$\sum_{d\mid n}{\varphi (d)\cdot \left(\frac{n}{d}\right)^{\varphi (n)}} \equiv \sum_{\substack{d\mid n \\ p_i\nmid\frac{n}{d}}}{\varphi (d)}\equiv$$

$$\equiv \sum_{\substack{d\mid n \\ p_i\mid d}}{\varphi (d)} \equiv \sum_{d\mid \frac{n}{p_i}}{\varphi (p_i)\varphi (d)} \equiv \sum_{d\mid \frac{n}{p_i}}{-\varphi (d)} \equiv -\frac{n}{p_i} \pmod{p_i}.$$

Therefore, by the uniqueness of the Chinese Remainder Theorem, we have already obtained the desired result.

\end{proof}

\begin{obs}\label{c-vs}It is important to note that Corollary \ref{ilc2} is a special case of a result by Carlitz and von Staudt, which can be found in \cite{carlitz1961staudt} and states that, if $k>1$ and $n$ is even, then

$$\sum_{i=1}^n{i^k}=\sum_{\substack{p\mid n \\ p-1\mid k}}{-\frac{n}{p}\pmod{n}}.$$

In our case, since $n$ is square-free and $k=\varphi(n),$ we have that $k$ is even since $\varphi(n)$ is always even for any value of $n$ greater than or equal to $3$ (although Corollary \ref{ilc2} also holds for $n=2$). On the other hand, it is immediately clear that $p-1\mid k$ for every divisor $p$ of $n$, and we therefore satisfy the conditions of the proposition.
\end{obs}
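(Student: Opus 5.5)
The plan is to check the observation in two stages. First, confirm that the hypotheses of the Carlitz--von Staudt formula hold when $k=\varphi(n)$ and $n$ is square-free, so that its right-hand side collapses to the sum in Corollary \ref{ilc2}. Second, to make the observation self-contained, rederive the square-free case of the Carlitz--von Staudt formula by the same Chinese Remainder argument used in the proof of Corollary \ref{ilc2}. That shows the two statements agree and do not merely look alike.

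\textbf{Step 1: the hypotheses.} Take $n=p_1\ldots p_k\geq 3$. The parity condition holds because $\varphi(n)$ is even for every $n\geq 3$. Indeed, either some odd prime $p_i$ divides $n$, and then $p_i-1$ is an even factor of $\varphi(n)=\prod_i(p_i-1)$; or $n$ is a power of $2$, which for square-free $n$ forces $n=2$. Since $\varphi(n)\geq 2$, the condition $k>1$ also holds. The divisibility condition $p_i-1\mid\varphi(n)$ holds for every $i$ because $\varphi(n)=\prod_j(p_j-1)$ when $n$ is square-free. Hence the restriction $p-1\mid k$ in the Carlitz--von Staudt sum removes no prime, and its right-hand side is exactly $\sum_{p_i\mid n}-\frac{n}{p_i}$, which is Corollary \ref{ilc2}.

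\textbf{Step 2: an independent check for square-free $n$.} Fix a prime $p\mid n$ and reduce $\sum_{i=1}^n i^{k}$ modulo $p$. As $i$ runs over $1,\ldots,n$, each residue class modulo $p$ occurs exactly $n/p$ times, so
$$\sum_{i=1}^n i^k\equiv \frac{n}{p}\sum_{r=0}^{p-1}r^k \pmod p.$$
Next, use the standard evaluation of power sums over $\mathbb{Z}_p$: $\sum_{r=0}^{p-1}r^k\equiv -1\pmod p$ if $p-1\mid k$, and $\equiv 0$ otherwise. To prove it, take a primitive root $g$ and sum the geometric series in $g^k$; if $g^k\neq 1$, multiplying the sum by $g^k$ permutes the nonzero terms, so the sum vanishes. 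This gives $\sum_{i=1}^n i^k\equiv -\frac{n}{p}$ when $p-1\mid k$ and $\equiv 0$ otherwise, modulo $p$.

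The candidate right-hand side $\sum_{q\mid n,\ q-1\mid k}-\frac{n}{q}$ has the same residue modulo $p$. Every term with $q\neq p$ is divisible by $p$, so only the term $q=p$ can survive, and it appears precisely when $p-1\mid k$. Since $n$ is square-free, the Chinese Remainder Theorem glues these congruences into a congruence modulo $n$. This recovers the Carlitz--von Staudt formula for square-free $n$, and Corollary \ref{ilc2} follows by setting $k=\varphi(n)$.

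\textbf{Step 3: the edge case and the main subtlety.} The excluded case $n=2$ is checked directly: $1^1+2^1\equiv 1\equiv -\frac{2}{2}\pmod 2$. The main point to handle carefully is the condition ``$n$ even'' as quoted. I expect it should read ``$k$ even'', since Step 2 never uses the parity of $n$. If the cited theorem genuinely requires $n$ even, then for odd $n$ I would rely on Step 2 rather than on the citation. Apart from this, the only nontrivial ingredient is the power-sum evaluation modulo $p$, which is routine.
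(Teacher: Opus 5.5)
Your proposal is correct. Step 1 is exactly the paper's justification. The observation rests only on checking that $k=\varphi(n)$ is even and greater than $1$ for $n\ge 3$, and that $p-1\mid\varphi(n)=\prod_{q\mid n}(q-1)$ for every prime $p\mid n$. Then the restriction $p-1\mid k$ in the Carlitz--von Staudt sum removes no prime, and the paper simply defers to the citation.

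Your Step 2 is a genuinely different, self-contained route. You count residues modulo each prime $p\mid n$ and use that $\sum_{r=0}^{p-1}r^k$ is $\equiv -1$ or $\equiv 0 \pmod p$ according as $p-1\mid k$ or not. This proves the congruence for square-free $n$ and \emph{every} $k\ge 1$, with no parity hypothesis, and the Chinese Remainder Theorem glues the pieces. What this buys is independence from the exact hypotheses of the cited theorem. That matters here, because the quoted hypothesis ``$k>1$ and $n$ even'' is actually false: for $n=4$, $k=3$ one gets $1^3+2^3+3^3+4^3=100\equiv 0\pmod 4$, while the formula gives $-\frac{4}{2}\equiv 2\pmod 4$. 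The intended hypothesis is that $k$ be even, which is what the paper's next sentence checks.

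Your stated reason for that reading is off, though. Step 2 uses the parity of neither $n$ nor $k$. Parity of $k$ only matters when $4\mid n$: for odd $k\ge 3$, pairing $i$ with $n-i$ leaves the term $(n/2)^k\equiv 0\pmod n$, while the formula predicts $n/2$. Square-freeness rules out $4\mid n$. The counterexample above, not Step 2, is the real reason to read ``$k$ even''.

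The price of your route is that gluing modulo primes works only for square-free $n$. The correctly stated citation also covers the paper's later remark (after Theorem \ref{inlc}) that Corollary \ref{ilc2} holds for all $n$. Finally, avoid using $k$ both for the number of prime factors and for the exponent in Step 1.
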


\begin{obs} This last result leads to one of the most widely used characte\-rizations of Giuga numbers, which is due to Giuga himself \cite{giuga1950presumibile}, and states that $n$ is a Giuga number if and only if 

$$\sum_{p\mid n}{\frac{1}{p}}-\prod_{p\mid n}{\frac{1}{p}}\in\mathbb{N}.$$

To arrive at this, we need only consider the equality 

$$\sum_{p_i\mid n}{-\frac{n}{p_i}}\equiv -1\pmod{n},$$

since $n$ is a Giuga number if $$\sum_{i=1}^n{i^{\varphi (n)}}\equiv -1\pmod{n},$$ and the calculations follow naturally. It may seem that we need to add the additional condition that $n$ be square-free, but, as we will see in the next section, the same result can be deduced for any value of $n$.
\end{obs}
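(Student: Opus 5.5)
The plan is to reduce the Giuga condition to one congruence modulo $n$ and then check that congruence prime by prime. By Corollary \ref{ilc2}, when $n=p_1\ldots p_k$ is square-free, the condition $\sum_{i=1}^n i^{\varphi(n)}\equiv -1 \pmod n$ is equivalent to
\[
\sum_{p\mid n}\frac{n}{p}\equiv 1 \pmod n .
\]
So it suffices to show that this congruence holds if and only if $\sum_{p\mid n}\frac1p-\prod_{p\mid n}\frac1p\in\mathbb{N}$.

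First I will rewrite the quantity with a common denominator. Since $n=\prod_{p\mid n}p$, we have $\prod_{p\mid n}\frac1p=\frac1n$. Hence
\[
\sum_{p\mid n}\frac1p-\prod_{p\mid n}\frac1p=\frac{1}{n}\Bigl(\sum_{p\mid n}\frac{n}{p}-1\Bigr).
\]
This rational number is an integer exactly when $n$ divides $\sum_{p\mid n}\frac np-1$, which is the displayed congruence. It is also positive once $k\ge 2$, because $\sum_{p\mid n}\frac np>1$. Both directions then follow: a Giuga number satisfies the congruence by Corollary \ref{ilc2}, and conversely the integrality gives the congruence, which in turn gives the sum of powers $\equiv -1 \pmod n$.

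The main obstacle is square-freeness. The identity $\prod_{p\mid n}\frac1p=\frac1n$ and Corollary \ref{ilc2} both require $n$ to be square-free. For the implication from the Giuga condition to integrality, I need to show that $\sum_{i=1}^n i^{\varphi(n)}\equiv -1\pmod n$ forces $n$ to be square-free.

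My first attempt at this is the following. Suppose $p^2\mid n$, and reduce the sum modulo $p$. Since $p-1\mid\varphi(n)$, each block of $p$ consecutive residues contributes $-1$ modulo $p$, so the total sum is $\equiv -\frac{n}{p}\equiv 0\pmod p$. This contradicts $\equiv -1$. So $n$ is square-free, and the square-free argument above applies.

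For the converse direction, the expression $\sum 1/p-\prod 1/p$ depends only on $\rad(n)$. I will therefore include square-freeness of $n$ as part of the characterization, as Giuga numbers are by definition square-free. The general extension the remark alludes to is deferred to the next section.
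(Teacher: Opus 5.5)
Your argument is correct, and its core is the paper's own route: Corollary~\ref{ilc2} turns the power-sum condition into $\sum_{p\mid n}\frac np\equiv 1\pmod n$, and for square-free $n$ the identity $\prod_{p\mid n}\frac1p=\frac1n$ lets you clear denominators. Where you differ is in how you handle square-freeness, and there your version is the right one.

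The paper proposes to drop the square-free hypothesis by citing the general-$n$ form of Corollary~\ref{ilc2} (Theorem~\ref{inlc}). That theorem does give $\sum_{i=1}^n i^{\varphi(n)}\equiv-\sum_{p\mid n}\frac np\pmod n$ for every $n$. But the condition it produces is $\sum_{p\mid n}\frac1p-\frac1n\in\mathbb{N}$, whereas the product over distinct primes equals $\frac1{\rad(n)}$. With the product as written, the \emph{if} direction is false for non-square-free $n$. For $n=60$ one has $\frac12+\frac13+\frac15-\frac1{30}=1$. Yet $\sum_{i=1}^{60}i^{16}\equiv-(30+20+12)\equiv 58\pmod{60}$ (equivalently, $2\nmid\frac{60}{2}-1$), so $60$ is not a Giuga number.

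Your remark that the expression depends only on $\rad(n)$ is exactly the reason for this failure. Your mod-$p$ count is the same computation as in the proof of Theorem~\ref{inlc}. It is what shows that the power-sum congruence itself forces square-freeness, so your forward direction holds for all $n$. So your proof establishes the strongest true version: the characterization with square-freeness kept in the converse.

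Rather than deferring the general extension, say outright that it fails in this form. It is valid for all $n$ only with $\frac1n$ in place of $\prod_{p\mid n}\frac1p$.

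One small point: your converse also needs $n$ composite. You get this only under the convention $0\notin\mathbb{N}$, since for a prime $p$ the expression is $\frac1p-\frac1p=0$. Either state that convention or add compositeness to the hypothesis.
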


\section{Idempotence and the sum of powers, gene\-ral case}

It is natural to wonder whether any of the previous results can be extended to the case where $n$ is not square-free. The answer is yes, but we must proceed with caution when proving them, since in the general case there are new elements that require attention: the so-called nilpotents of $\mathbb{Z}_n.$

\begin{defn}
Given $n\in\mathbb{N},$ we say that $a$ is a nilpotent of $\mathbb{Z}_n$ if there exists a positive $k$ such that $a^k\equiv 0 \pmod{n}.$
\end{defn}

Idempotents and nilpotents were both introduced by Benjamin Peirce in 1870 in \cite{peirce1881linear} and are a key tool for the study of powers in $\mathbb{Z}_n$, as well as the subject of doctoral theses even today, such as \cite{sanchez2023elementos}. One of the most classic results on nilpotents that we will use involves the so-called radical of a natural number.

\begin{defn}
Given $n\in\mathbb{N},$ $n=p_1^{r_1}\ldots p_k^{r_k},$ the radical of $n$ is defined as $m=p_1\ldots p_k,$ that is, the greatest square-free divisor of $n,$ and is denoted by $m=\rad (n).$
\end{defn}

\begin{thm}
The nilpotents of $\mathbb{Z}_n$ are the multiples of the radical of $n$, and in fact the radical of $n$ is the smallest non-trivial nilpotent of $\mathbb{Z}_n.$
\end{thm}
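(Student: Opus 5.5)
We prove the two inclusions separately, writing $n=p_1^{r_1}\ldots p_k^{r_k}$ and $m=\rad(n)=p_1\ldots p_k$. The whole argument reduces to one elementary fact: for each $i$, the prime power $p_i^{r_i}$ divides $a^j$ for some $j$ if and only if $p_i\mid a$. This follows directly from the uniqueness of prime factorization.

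\emph{Multiples of $m$ are nilpotent.} Suppose $a=m\,t$. Let $r=\max_i r_i$; it would also suffice to take $r$ to be any common bound on the exponents. Then $a^{r}=m^{r}t^{r}$, and $m^{r}=p_1^{r}\ldots p_k^{r}$ is divisible by $p_1^{r_1}\ldots p_k^{r_k}=n$. Hence $a^{r}\equiv 0\pmod n$.

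\emph{Every nilpotent is a multiple of $m$.} Suppose $a^j\equiv 0\pmod n$ for some positive $j$. Then each $p_i$ divides $n$, which divides $a^j$. Since $p_i$ is prime, Euclid's lemma gives $p_i\mid a$. The primes $p_i$ are pairwise coprime, so their product $m$ divides $a$.

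\emph{Minimality.} Combining the two inclusions, the nilpotents of $\mathbb{Z}_n$ are exactly the residues $0,m,2m,\ldots,(n/m-1)m$. The smallest nonzero one is therefore $m$ itself, provided $m<n$. If $m=n$, that is, if $n$ is square-free, the only nilpotent is $0$. This matches the remark made after Theorem \ref{ilc}, and in this case the phrase ``smallest non-trivial nilpotent'' should be read as vacuous, or as referring to $m\equiv 0$.

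None of these steps is a real obstacle, since the result is classical. The only point needing care is this degenerate square-free case, which the statement glosses over; the proof should note it explicitly.
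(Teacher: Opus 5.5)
Your proof is correct and complete. There is no paper proof to compare it with: the paper introduces this statement as ``one of the most classic results on nilpotents'' and asserts it without argument. Your two inclusions are exactly the standard argument that the paper leaves implicit. Euclid's lemma gives that every nilpotent is a multiple of $\rad(n)$, and the exponent bound $r=\max_i r_i$ gives that every multiple of $\rad(n)$ is nilpotent.

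Two small remarks.

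First, the exponent the paper actually relies on later is $\varphi(n)$. In the periodicity theorem it asserts $m^{\varphi(n)}\equiv 0\pmod n$, which follows from the bound $r_i\le\varphi(n)$ shown in the proof that $a^{\varphi(n)}$ is always idempotent. Your choice $r=\max_i r_i$ is sharper, since it is precisely the nilpotency index of $m$. It also implies the paper's uniform version, because $\max_i r_i\le\varphi(n)$.

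Second, you observe that for square-free $n$ one has $m=n\equiv 0$, so $\mathbb{Z}_n$ has no non-trivial nilpotent and the minimality clause is vacuous. This is a genuine clarification of the statement as written, and it agrees with the paper's own remark after Theorem~\ref{ilc}.
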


A first conclusion that follows directly from the previous section is that the idempotents of $\mathbb{Z}_n$ are obtained in a manner analogous to those of its square-free version; that is, they are the elements of the form $d^{\varphi (n)},$ but this time with $d\mid \rad (n). $ Another immediate conclusion from the previous results is that if $n$ is not square-free, the idempotents of $\mathbb{Z}_n$ are exactly the elements of the form $d^{\varphi (n)}$ with $d\mid n,$ $\gcd(d,\frac{n}{d})=1,$ that is, $d$ is a unit divisor. It is straightforward to prove this by simply repeating the steps of the proof for the square-free case in an almost identical manner.

In fact, it can be verified that, given any nonzero idempotent $a$ of the form $a\equiv d^{\varphi (n)}\pmod{n}$ with $d\mid\rad (n),$ then $a$ has exactly $2^{\omega (d)}$ idempotents orthogonal to it, since the smallest non-trivial orthogonal element of $d^{\varphi(n)}$ is equal to $\left(\frac{n}{d}\right)^{\varphi(n)},$ but any other of the form $\left( f\cdot\frac{n}{d}\right)^{\varphi(n)},$ with $f\mid d,$ will also be valid, since their product will still be a multiple of $\rad (n)$ and will be an idempotent distinct from the previous one, and in total there are as many of these as there are ways to combine the prime divisors of $d.$

For example, in $\mathbb{Z}_{30}$ the idempotents are $Id_{30}=\{0,1,6,10,15,16,21,25\}$ and all idempotents orthogonal to $15$ are $\{0,6,10,16\}, $ since $15\equiv\left(3\cdot 5\right)^8\pmod{30}, $ therefore $16\equiv\left(\frac{30}{15}\right) ^8\pmod{30}$ will be an idempotent orthogonal to it, but so will $10\equiv\left(2\cdot5\right) ^8\pmod{30},$ $6\equiv\left(2\cdot3\right) ^8\pmod{30}$, and
$0\equiv\left(2\cdot3\cdot5\right) ^8\pmod{30}$, which are $2^{\omega(15)}=4$ in total. However, $16$ is orthogonal only to $15$ and $0$ in $\mathbb{Z}_{30}$, since $16\equiv 2^8\pmod{30}$ and, therefore, since it is a prime number, $\omega(2)=1$. All the arguments are identical even if $n$ is not square-free.

For the following result, we will need two preliminary lemmas. It is a result that shows the periodicity of powers of the form $a^{\varphi(n),}$ where $\rad(n)$ is the period in the ring $\mathbb{Z}_n.$

\begin{lem}\label{lem1}
Let $n\in\mathbb{N}$ and $m=\rad(n)$. Then $m^s\varphi(n) = n\varphi(m^s)$.
\end{lem}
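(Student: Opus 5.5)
The identity follows from the product formula $\varphi(N)=N\prod_{p\mid N}\left(1-\frac{1}{p}\right)$, which depends only on $N$ and on its set of prime divisors. Write $n=p_1^{r_1}\cdots p_k^{r_k}$, so that $m=\rad(n)=p_1\cdots p_k$. The statement should be read for $s\geq 1$ (a positive integer). For $s=0$ the left side is $\varphi(n)$ and the right side is $n\varphi(1)=n$, so the identity fails unless $n=1$.

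The key observation is that for $s\geq 1$ the number $m^s=p_1^{s}\cdots p_k^{s}$ has exactly the same prime divisors as $n$, namely $p_1,\ldots,p_k$. Hence both totients involve the same Euler product:
$$\varphi(n)=n\prod_{i=1}^k\left(1-\frac{1}{p_i}\right),\qquad \varphi(m^s)=m^s\prod_{i=1}^k\left(1-\frac{1}{p_i}\right).$$
Call this common product $P$. Multiplying the first equation by $m^s$ and the second by $n$ gives $m^s\varphi(n)=m^s n P=n\varphi(m^s)$, which is the claim.

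Alternatively, one can avoid rational factors and argue prime by prime using multiplicativity, $\varphi(p^r)=p^{r-1}(p-1)$ for $r\geq 1$. Then
$$m^s\varphi(n)=\prod_i p_i^{s}\,p_i^{r_i-1}(p_i-1)\quad\text{and}\quad n\varphi(m^s)=\prod_i p_i^{r_i}\,p_i^{s-1}(p_i-1),$$
and the $i$-th factors agree because both equal $p_i^{r_i+s-1}(p_i-1)$. This version makes visible that the exponents $r_i\geq 1$ and $s\geq 1$ are exactly what is needed for the formula $\varphi(p^r)=p^{r-1}(p-1)$ to apply.

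There is no real obstacle here. The only point needing care is to state explicitly the hypothesis $s\geq 1$, since this is what guarantees $\rad(m^s)=\rad(n)$. The trivial case $n=1$ (so $m=1$) also holds directly, since both sides equal $1$.
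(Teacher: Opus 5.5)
Your proof is correct, and your second, prime-by-prime argument is exactly the paper's computation; the Euler-product version just repackages the same observation that $m^s$ and $n$ have the same prime divisors. Your remark that $s\geq 1$ is needed is a genuine clarification: the paper leaves $s$ unquantified, but its step $\varphi(m^s)=\prod_i p_i^{s-1}(p_i-1)$ silently assumes $s\geq 1$, and that is the only case used later (with $s=k\geq 1$ in the periodicity theorem).
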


\begin{proof}
Let $n=p_1^{r_1}\ldots p_k^{r_k},$ then $$m^s\varphi (n) = \left(\prod_{i=1}^k{p_i^s}\right)\left(\prod_{i=1}^k{p_i^{r_i}-p_i^ {r_i-1}}\right) =$$ $$=\left(\prod_{i=1}^k{p_i^{r_i-1}\cdot p_i}\right)\left(\prod_{i=1}^k{p_i^{s-1}(p_i-1)}\right) = n\varphi (m^s).$$
\end{proof}

\begin{lem}\label{lem2}
Let $n\in\mathbb{N}$, $m=\rad (n). $ Suppose that $n=p_1^{r_1}\ldots p_k^{r_k},$ and let $s\in\mathbb{N}$ such that $n\nmid m^s.$ Then, if $r_i>s$, $p_i^{r_i-s}\mid \binom{\varphi (n)}{s}.$
\end{lem}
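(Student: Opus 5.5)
The plan is to argue one prime at a time with the $p_i$-adic valuation $v_{p_i}$, using the absorption identity for binomial coefficients. Write $N=\varphi(n)$ and take $s\geq 1$, as is implicit in $s\in\mathbb{N}$; for $s=0$ the statement would require $p_i^{r_i}\mid 1$, which fails for $n>1$. The hypothesis $n\nmid m^s$ only guarantees that some index $i$ with $r_i>s$ exists, so I will just fix such an $i$ and write $p=p_i$, $r=r_i$.

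First, I record a lower bound on $v_p(N)$. Since $\varphi$ is multiplicative, $\varphi(p^{r})=p^{r-1}(p-1)$ divides $\varphi(n)$, so $v_p(N)\geq r-1$.

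Next, I use the identity
\[
s\binom{N}{s}=N\binom{N-1}{s-1}.
\]
The factor $\binom{N-1}{s-1}$ is an integer, so
\[
v_p\!\left(\binom{N}{s}\right)\geq v_p(N)-v_p(s)\geq (r-1)-v_p(s).
\]

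It then remains to show $v_p(s)\leq s-1$. Put $v=v_p(s)$. Then $p^{v}\mid s$, so $p^{v}\leq s$. Also $p^{v}\geq 2^{v}\geq v+1$. Together these give $v+1\leq s$, that is $v\leq s-1$.

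Combining the two inequalities gives $v_p\!\left(\binom{N}{s}\right)\geq r-1-(s-1)=r-s$, so $p_i^{r_i-s}\mid\binom{\varphi(n)}{s}$, as claimed. The bound is only meaningful because $r_i>s$ makes the exponent positive.

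The only delicate step is controlling $v_p(s)$. A naive approach would apply Legendre's formula to $s!$ in the quotient $N(N-1)\cdots(N-s+1)/s!$. That gives $v_p(s!)\leq (s-1)/(p-1)$, which also works, but it requires tracking the numerator factors as well. Absorbing a single factor of $s$ via the identity above avoids this and reduces everything to the elementary inequality $p^{v}\geq v+1$.
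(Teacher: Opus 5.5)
Your proof is correct, but it handles the denominator differently from the paper. The paper writes $\binom{\varphi(n)}{s}=\varphi(n)\cdot\frac{(\varphi(n)-1)\cdots(\varphi(n)-s+1)}{s!}$ and drops the numerator factors. It then bounds $v_{p_i}(s!)$ with Legendre's formula, $v_{p_i}(s!)<s/(p_i-1)\le s$, so $v_{p_i}(s!)\le s-1$. These at most $s-1$ factors of $p_i$ are cancelled against the surplus $p_i^{s-1}$ in $p_i^{r_i-1}\mid\varphi(n)$.

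You instead absorb only the single factor $s$, through $s\binom{N}{s}=N\binom{N-1}{s-1}$. After that you need nothing beyond $p^{v_p(s)}\le s$. This avoids Legendre's formula entirely. It also gives a sharper conclusion, $v_{p_i}\bigl(\binom{\varphi(n)}{s}\bigr)\ge r_i-1-v_{p_i}(s)\ge r_i-1-\lfloor\log_{p_i}s\rfloor$, which is much stronger than $r_i-s$ for large $s$.

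One small correction to your closing remark: the Legendre route does not require tracking the numerator factors either. They are integers with nonnegative valuation and can be discarded exactly as you discard $\binom{N-1}{s-1}$. The real difference between the two arguments is only $v_p(s)$ versus $v_p(s!)$.

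Your side observations are accurate. The lemma needs $s\ge 1$, which the paper uses implicitly by starting from the base case $s=1$. The hypothesis $n\nmid m^s$ is redundant once $r_i>s$ is assumed.
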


\begin{proof}
Suppose that $s=1,$ then $$\binom{\varphi (n)}{1}=\varphi (n)=\varphi \left (p_i^{r_i}\right)\cdot\varphi\left(\frac{n}{p_i^{r_i}}\right),$$ and $\varphi\left( p_i^{r_i}\right)=p_i^ {r_i-1}(p_i-1),$ so $p_i^{r_i-1}\mid \binom{\varphi (n)}{1}.$

In the general case, we have that

$$\binom{\varphi (n)}{s}=\frac{\varphi (n)-s+1}{s} \binom{\varphi (n)}{s-1}=\ldots =$$

$$= \frac{\left( \varphi (n)-s+1\right)\left( \varphi (n)-s+2\right)\ldots\left( \varphi (n)-1\right)}{s(s-1)\ldots\cdot 2}\cdot\binom{\varphi (n)}{1}.$$

By the base case, we know that $p_i^{r_i-1}\mid \binom{\varphi (n)}{1}$, so $p_i^{r_i-s}\mid p_i^{r_i-1}\mid \binom{\varphi (n)}{1}$. We have $p_i^{s-1}$ left over, but $p_i$ will divide $s(s-1)\ldots\cdot 2=s!$ several times. If we denote $x_s(p_i)$ as the multiplicity of $p_i$ in $s!$, a classic result of Legendre, which can be read in detail in \cite{aigner1999proofs}, tells us that

$$x_s(p_i)=\sum_{j\geq 1}{\left\lfloor\frac{n}{p_i^j}\right\rfloor},$$

but since $\lfloor x \rfloor < x,$ we have that $$\frac{x_s(p_i)}{s} < \frac{1}{p_i} + \frac{1}{p_i^2} + \ldots = \frac{1}{p_i-1}. $$ The worst-case scenario is $p_i=2,$ which leads to $x_s(2)<s,$ but since we have $s-1$ times the factor $p_i$ left over (twice in this case), we can cancel out all occurrences of $p_i$ in $s!$ with the ones left over from $p_i^{r_i-1}$ and obtain the desired result.
\end{proof}

Using these lemmas, we can now prove the following

\begin{thm}
Let $n\in\mathbb{N}$ and $m=\rad(n)$. Let $a\in\mathbb{Z}_n$ such that $a=dm+b$, where $d$ and $b$ are the unique quotient and remainder, respectively, of the Euclidean division. Then $a^{\varphi (n)}\equiv b^{\varphi (n)}\pmod{n}. $ In other words, powers of the form $u^{\varphi (n)}$ in $\mathbb{Z}_n$ are periodic with period $m,$ where furthermore $m^{\varphi (n)}\equiv 0 \pmod{n}$ and we have that $m$ is the smallest non-trivial nilpotent of $n.$
\end{thm}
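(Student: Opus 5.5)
The natural route is prime by prime, followed by the Chinese Remainder Theorem, since $a$ and $b$ differ by a multiple of $m$. Write $n=p_1^{r_1}\ldots p_k^{r_k}$. By the Chinese Remainder Theorem it suffices to show $a^{\varphi(n)}\equiv b^{\varphi(n)}\pmod{p_i^{r_i}}$ for each $i$. Since $m\mid a-b$, we have $p_i\mid a-b$, so $p_i\mid a$ if and only if $p_i\mid b$. I split into two cases according to this.

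\emph{Case $p_i\mid a$.} Then also $p_i\mid b$. The inequality chain already used in the proof of the theorem on $a^{\varphi(n)}$ being idempotent gives $r_i\le\varphi(n)$, so $a^{\varphi(n)}\equiv 0\equiv b^{\varphi(n)}\pmod{p_i^{r_i}}$.

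\emph{Case $p_i\nmid a$.} Then $p_i\nmid b$, and Euler's Theorem (\ref{euler}) together with $\varphi(p_i^{r_i})\mid\varphi(n)$ gives $a^{\varphi(n)}\equiv 1\equiv b^{\varphi(n)}\pmod{p_i^{r_i}}$.

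Combining the two cases for every $i$ yields the congruence modulo $n$. The remaining claims are immediate. We have $m^{\varphi(n)}\equiv 0\pmod n$ by the first case applied to every prime. Minimality of $m$ among non-trivial nilpotents is the cited theorem on nilpotents: any $x$ with $x^j\equiv 0\pmod n$ must be divisible by each $p_i$, hence by $m$.

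The paper's framing suggests an alternative using Lemmas \ref{lem1} and \ref{lem2}. One would expand $(dm+b)^{\varphi(n)}=\sum_s\binom{\varphi(n)}{s}(dm)^s b^{\varphi(n)-s}$ and show that each term with $s\ge1$ vanishes modulo $p_i^{r_i}$. For $s\ge r_i$ this holds because $p_i^s\mid m^s$. For $1\le s<r_i$, Lemma \ref{lem2} supplies $p_i^{r_i-s}\mid\binom{\varphi(n)}{s}$, and $p_i^s\mid m^s$ supplies the rest, giving $p_i^{r_i}$ in total.

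The delicate step in that binomial route is the valuation count in Lemma \ref{lem2}, since Legendre's bound is tight for $p_i=2$. For this reason I would present the CRT/Euler argument as the main proof, because it avoids binomial valuations entirely. I would keep the binomial expansion only as a remark showing consistency with the lemmas. Under that plan there is no real obstacle; the only point needing care is noting that $p_i\mid a\iff p_i\mid b$, which is exactly where $m\mid a-b$ is used.
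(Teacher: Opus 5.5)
Your proof is correct, but it takes a different route from the paper's. The paper expands $(dm+b)^{\varphi(n)}$ binomially and proves the coefficient-level statement $n\mid\binom{\varphi(n)}{k}m^k$ for every $k\ge 1$, using Lemma \ref{lem1} and the valuation bound of Lemma \ref{lem2}. That is essentially the argument you relegate to a remark.

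You instead reuse the local dichotomy from the earlier proof that $a^{\varphi(n)}$ is idempotent: $a^{\varphi(n)}\equiv 0$ or $1 \pmod{p_i^{r_i}}$ according as $p_i\mid a$ or not. So $a^{\varphi(n)} \bmod n$ depends only on which primes divide $a$, and $m\mid a-b$ preserves exactly that information.

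Your version is shorter and avoids Legendre's formula and binomial valuations entirely. It also gives a bit more: $a^{\varphi(n)}\equiv\gcd(a,m)^{\varphi(n)}\pmod n$, so the map is constant on whole gcd-classes, which fits naturally with Lemma \ref{idp}. It also shows $m$ is the least period, since taking $u=0$ forces any period $t$ to satisfy $t^{\varphi(n)}\equiv 0\pmod n$.

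The paper's route buys something different. Because the vanishing is coefficientwise, it yields $(x+my)^{\varphi(n)}=x^{\varphi(n)}$ as an identity in $\mathbb{Z}_n[x,y]$. That identity holds in any commutative $\mathbb{Z}_n$-algebra, where no $0/1$ dichotomy is available.

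As written, the paper's last step examines only one prime with $r_i>k$ before asserting $\varphi(n)\mid\binom{\varphi(n)}{k}\varphi(m^k)$. What actually closes that argument is the prime-by-prime bookkeeping in your remark, with the cases $s\ge r_i$ and $s<r_i$ handled separately.
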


\begin {proof}
We have that $$a^{\varphi (n)}=(dm+b)^{\varphi (n)}=\sum_{k=0}^{\varphi (n)}{\binom{\varphi (n)}{k}(dm)^k b^{\varphi (n)-k}}. $$ We will show that $$\binom{\varphi (n)}{k}m^k\equiv 0 \pmod{n}$$ for all $k\geq 1.$ Then all terms of the binomial will vanish except for $k=0,$ which is precisely $b^{\varphi (n)}.$

By Lemma \ref{lem1} we know that $m^k\varphi (n)=n\varphi (m^k)$ and therefore $$\binom{\varphi (n)}{k}m^k=\binom{\varphi (n)}{k}n\cdot\frac{\varphi (m^k)}{\varphi (n)}.$$ If $n\mid m^k$ then $\varphi(n)\mid \varphi(m^k)$ and we have it. Otherwise, there exists at least one $p_i$ with $r_i>k.$ We know that $p_i^{r_i}=p_i^{k}\cdot p_i^{r_i-k}$ and $p_i^k\mid m^k. $ By Lemma \ref{lem2}, $p_i^{r_i-k}\mid \binom{\varphi (n)}{k}.$ Therefore, $\varphi (n)\mid \binom{\varphi (n)}{k}\cdot\varphi (m^k)$, and we have proven the claim.
\end{proof}

\begin{cor}
Let $S_m=\sum_{i=1}^m{i^{\varphi (m)}}\pmod{m}$, where $S_m<m$. Then $$\sum_{i=1}^n{i^{\varphi (n)}}\equiv\frac{n}{m}\cdot S_m\pmod{n}.$$
\end{cor}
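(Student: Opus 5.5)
The plan is to combine the periodicity theorem just proved with a comparison of the exponents $\varphi(n)$ and $\varphi(m)$ modulo the square-free number $m=\rad(n)$.

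\textbf{Step 1: reduce to one period.} The integers $1,\ldots,n$ split into $n/m$ consecutive blocks of length $m$. By the preceding theorem, $a^{\varphi(n)}\equiv b^{\varphi(n)}\pmod n$ whenever $a\equiv b\pmod m$. So each block contributes the same residue modulo $n$, and
$$\sum_{i=1}^n i^{\varphi(n)}\equiv \frac{n}{m}\sum_{i=1}^m i^{\varphi(n)}\pmod n.$$

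\textbf{Step 2: replace the exponent $\varphi(n)$ by $\varphi(m)$ modulo $m$.} I claim that $i^{\varphi(n)}\equiv i^{\varphi(m)}\pmod m$ for every $i$. Since $m$ is square-free, by the Chinese Remainder Theorem it suffices to check this modulo each prime $p\mid m$.
\begin{itemize}
\item If $p\mid i$, both sides are $0$ modulo $p$, because both exponents are positive.
\item If $p\nmid i$, both sides are $1$ modulo $p$. Indeed, $p-1=\varphi(p)$ divides both $\varphi(m)$ and $\varphi(n)$, since $p$ divides both $m$ and $n$ and $\varphi$ is multiplicative. Fermat's Little Theorem then gives the value $1$.
\end{itemize}
Summing over $i$, and using the definition of $S_m$, we get
$$\sum_{i=1}^m i^{\varphi(n)}\equiv \sum_{i=1}^m i^{\varphi(m)}\equiv S_m\pmod m.$$

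\textbf{Step 3: lift the congruence.} Write $\sum_{i=1}^m i^{\varphi(n)}=S_m+tm$ for some integer $t$. Multiplying by $n/m$ gives
$$\frac{n}{m}\sum_{i=1}^m i^{\varphi(n)}=\frac{n}{m}S_m+tn\equiv \frac{n}{m}S_m\pmod n.$$
Combining this with Step 1 proves the corollary.

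\textbf{Main obstacle.} The substantive input is the periodicity theorem, which is already available. The only delicate point is Step 2, which must be done modulo $m$ rather than modulo $n$. This works precisely because $m$ is square-free, so there are no nilpotents to worry about. Step 3 then ensures that the factor $n/m$ upgrades a congruence modulo $m$ to one modulo $n$. The trivial case $n=1$ (so $m=1$) holds directly.
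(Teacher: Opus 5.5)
Your proposal is correct and follows the paper's proof step for step. Periodicity with period $m=\rad(n)$ reduces the sum to $\frac{n}{m}$ copies of $\sum_{i=1}^m i^{\varphi(n)}$; the exponent $\varphi(n)$ is replaced by $\varphi(m)$ modulo $m$; and multiplying a congruence modulo $m$ by $\frac{n}{m}$ turns it into one modulo $n$. The only difference is in how you justify the exponent swap. The paper uses the idempotence of $i^{\varphi(m)}$ in $\mathbb{Z}_m$, implicitly with $\varphi(n)=\frac{n}{m}\varphi(m)$ (Lemma~\ref{lem1} with $s=1$). You instead check it prime by prime using Fermat's Little Theorem and the Chinese Remainder Theorem, which is the more explicit of the two.
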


\begin{proof}
We know that $\mathbb{Z}_n$ has $\frac{n}{m}$ nilpotents, and each defines a sum-of-powers cycle of the form $i^{\varphi (n)}$, so it suffices to sum the first cycle and then multiply it by $\frac{n}{m}.$

We have that

$$S_m=\sum_{i=1}^m{i^{\varphi (m)}}\pmod{m}\equiv\sum_{i=1}^m{i^{\varphi (n)}}\pmod{m},$$

since $m\mid n$ and $i^{\varphi (m)}$ is an idempotent in $\mathbb{Z}_m.$ On the other hand, $S_m\equiv S_m+am$ in $\mathbb{Z}_n,$ with $a\geq 0$, and therefore $$\frac{n}{m}\cdot (S_m+am)\equiv\frac{n}{m}S_m+n\equiv\frac{n}{m}S_m\pmod{n},$$ and we can consider without loss of generality $a=0$ and $S_m<m.$

Then $$\frac{n} {m}S_m\equiv\frac{n}{m}\sum_{i=1}^m{i^{\varphi (m)}}\equiv\frac{n}{m}\sum_{i=1}^m{i^{\varphi (n)}}\equiv\sum_{i=1}^n{i^{\varphi (n)}}\pmod{n}.$$
\end{proof}

We will now verify that Theorem \ref{ilc} holds not only when $n$ is square-free, but also in the general case.

\begin{thm}\label{inlc}
For all $n\in\mathbb{N},$ $$\sum_{i=1}^n{i^{\varphi (n)}}\equiv\sum_{d\mid n}{\varphi (d)\left(\frac{n}{d}\right)^{\varphi (n)}}\pmod{n}.$$
\end{thm}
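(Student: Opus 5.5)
I will prove the congruence prime power by prime power: fix a prime $p\mid n$ with $p^r\parallel n$, show both sides agree modulo $p^r$, and conclude by the Chinese Remainder Theorem. On both sides each term $x^{\varphi(n)}$ is an idempotent of $\mathbb{Z}_n$, by the theorem of Section 1. Modulo $p^r$ such a term is therefore $0$ if $p\mid x$ and $1$ otherwise. Indeed, if $p\mid x$ then $x^{\varphi(n)}\equiv 0\pmod{p^r}$ because $r\le\varphi(n)$, as shown in that proof; if $p\nmid x$ then $x^{\varphi(n)}\equiv1\pmod{p^r}$ by Euler's Theorem (\ref{euler}) and $\varphi(p^r)\mid\varphi(n)$.

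The left-hand side is then easy. Modulo $p^r$, the sum $\sum_{i=1}^n i^{\varphi(n)}$ counts the $i\in\{1,\dots,n\}$ with $p\nmid i$, namely
$$n\left(1-\frac1p\right)=\frac{n}{p}(p-1).$$

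For the right-hand side, $(n/d)^{\varphi(n)}\equiv 1\pmod{p^r}$ exactly when $p\nmid n/d$, that is, when $p^r\mid d$, and is $0$ otherwise. So modulo $p^r$ the right-hand side reduces to
$$\sum_{\substack{d\mid n\\ p^r\mid d}}\varphi(d).$$
Write $n=p^r n'$ with $p\nmid n'$. The relevant divisors are $d=p^r e$ with $e\mid n'$, and multiplicativity of $\varphi$ gives
$$\sum_{e\mid n'}\varphi(p^r)\varphi(e)=\varphi(p^r)\,n'$$
by Gauss's identity $\sum_{e\mid m}\varphi(e)=m$. Since $\varphi(p^r)n'=p^{r-1}(p-1)n'=\frac{n}{p}(p-1)$, this is exactly the value found for the left-hand side. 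Both sides therefore agree modulo every $p^r\parallel n$, and CRT finishes the proof.

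The only delicate point is the claim that a term divisible by $p$ vanishes modulo $p^r$; it requires $r\le\varphi(n)$, which was already established in the proof of the idempotence theorem. The equality is thus an exact integer identity, not merely a congruence, so there is no real obstacle. The computation mirrors the proof of Corollary \ref{ilc2} and avoids any appeal to the periodicity results of this section.
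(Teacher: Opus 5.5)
Your proof is correct and follows the same route as the paper. Both arguments reduce each side, prime by prime, to the count $n-\frac{n}{p}$ of $i$ coprime to $p$ and to $\sum_{p^r\mid d\mid n}\varphi(d)=\varphi(p^r)\frac{n}{p^r}$ via Gauss's identity, then glue with the Chinese Remainder Theorem. The one difference is in your favor: the paper compares the two sides only modulo each prime $p_j$, which by CRT gives the congruence merely modulo $\rad(n)$, whereas you work modulo $p^r$ and use $r\le\varphi(n)$ so that every term is genuinely $0$ or $1$ there, which is exactly what is needed when $n$ is not square-free.
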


\begin{proof}
Let $p_j$ be a prime divisor of $n$. If $n=p_j^{r_j}u,$ with $\gcd(p_j,u)=1,$ then $i^{\varphi (n)}\equiv 0\pmod{p_j}$ if $p_j\mid i$ and $1$ otherwise, since in that case $\gcd(p_j,i)=1$ and $$\varphi (n)=\varphi (u)\cdot p_j^{r_j-1}(p_j-1)=\varphi (u)\cdot p_j^{r_j-1}\varphi (p_j),$$ and we can then apply Euler’s theorem. Let us verify that, for every prime divisor of $n$, the left and right sides of our hypothetical congruence coincide and, therefore, by the uniqueness of the Chinese Remainder Theorem, the congruence must necessarily hold.

The left side in $\mathbb{Z}_{p_j}$ is equivalent to

$$\#\left\{ i\in\mathbb{Z}_n / \gcd(p_j,i)=1\right\},$$

which is the same as

$$n-\frac{n}{p_j}\equiv -\frac{n}{p_j}\pmod{p_j}.$$

As for the right-hand side, $\left(\frac{n}{d}\right)^{\varphi (n)}$ is congruent in $\mathbb{Z}_{p_j}$ to $0$ if $p_j\mid\frac{n}{d}$ and to $1$ otherwise, so

$$\sum_{d\mid n}{\varphi (d)\left(\frac{n}{d}\right)^{\varphi (n)}}\equiv\sum_{p_j\nmid\frac{n}{d}}{\varphi (d)}\pmod{p_j}.$$

But this is the same as

$$\sum_{\substack{p_j^{r_j}\mid d\\d\mid n}}{\varphi (d)}=\sum_{a\mid \frac{n}{p_j^{r_j}}}{\varphi \left( ap_j^{r_j}\right)} =\sum_{a\mid \frac{n}{p_j^{r_j}}}{\varphi (a)\varphi \left( p_j^{r_j}\right)}=$$

$$=\varphi \left( p_j^{r_j}\right)\sum_{a\mid \frac{n}{p_j^{r_j}}}{\varphi (a)}=\varphi \left( p_j^{r_j}\right)\cdot\frac{n}{p_j^{r_j}}=\left ( p_j^{r_j}-p_j^{r_j-1}\right)\cdot\frac{n}{p_j^{r_j}}=n-\frac{n}{p_j},$$

which is congruent to $-\frac{n}{p_j}$ in $\mathbb{Z}_{p_j}$, and we are done.
\end{proof}

\begin{obs}
In the proof of Theorem \ref{inlc}, we also verified along the way—again using the Chinese Remainder Theorem—that the congruence in Theorem \ref{ilc2},

$$\sum_{i=1}^n{i^{\varphi (n)}}\equiv \sum_{p_i\mid n}{-\frac{n}{p_i}}\pmod{n},$$

holds for all $n\in\mathbb{N}$, not just for $n$ free of squares. In fact, once again this statement is a special case of the Carlitz-von Staudt lemma mentioned in observation \ref{c-vs}. We remain within the conditions of the proposition since $\varphi(n)$ is even if $n\geq 3$ (although we have also verified this for $n=2$) and $p_i-1\mid\varphi(n)$ for every prime divisor $p_i\mid n$, and therefore it is not necessary to specify this in the summation index.
\end{obs}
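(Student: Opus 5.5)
The plan is to prove the congruence
$$\sum_{i=1}^n{i^{\varphi (n)}}\equiv \sum_{p\mid n}{-\frac{n}{p}}\pmod{n}$$
for arbitrary $n$ by working modulo each full prime power $p^{r}\,\|\, n$ and then gluing with the Chinese Remainder Theorem. The computation in the proof of Theorem \ref{inlc} only compares both sides modulo the primes $p_j$, which a priori gives the congruence modulo $\rad(n)$ only. The main obstacle is therefore to upgrade the argument from $\mathbb{Z}_{p_j}$ to $\mathbb{Z}_{p_j^{r_j}}$. I expect this to be easy, because the powers $i^{\varphi(n)}$ are already $0$ or $1$ modulo the full prime power.

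Fix a prime $p$ with $n=p^{r}u$ and $\gcd(p,u)=1$. First, for $p\mid i$ I will use the inequality from the proof that $a^{\varphi(n)}$ is idempotent, namely $r\leq p^{r-1}(p-1)\leq\varphi(n)$. This gives $i^{\varphi(n)}\equiv 0\pmod{p^{r}}$. Second, for $p\nmid i$, Euler's Theorem \ref{euler} together with $\varphi(p^{r})\mid\varphi(n)$ gives $i^{\varphi(n)}\equiv 1\pmod{p^{r}}$. Hence, modulo $p^{r}$, the left-hand side counts the integers $1\leq i\leq n$ not divisible by $p$:
$$\sum_{i=1}^n{i^{\varphi (n)}}\equiv n-\frac{n}{p}\equiv -\frac{n}{p}\pmod{p^{r}},$$
using $p^{r}\mid n$.

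For the right-hand side, take a prime $q\mid n$ with $q\neq p$. Then $\frac{n}{q}$ still contains the full factor $p^{r}$, so $\frac{n}{q}\equiv 0\pmod{p^{r}}$. Only the term with $q=p$ survives, and $\sum_{q\mid n}-\frac{n}{q}\equiv-\frac{n}{p}\pmod{p^{r}}$. Both sides therefore agree modulo every $p^{r}\,\|\,n$. Since these prime powers are pairwise coprime, the Chinese Remainder Theorem gives the congruence modulo $n$, for every $n\in\mathbb{N}$; the case $n=2$ can also be checked directly.

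The same prime-power computation applies verbatim to the right-hand side of Theorem \ref{inlc}, so it also reinforces that theorem. Finally, to match the Carlitz--von Staudt statement of Observation \ref{c-vs}, I will check its hypotheses: $\varphi(n)$ is even for $n\geq 3$, and $p-1\mid\varphi(n)$ for every prime $p\mid n$. The second condition means every prime divisor appears in the Carlitz--von Staudt sum, so no restriction on the summation index is needed.
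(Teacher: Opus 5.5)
Your argument is correct and follows the same blueprint as the paper's proof of Theorem~\ref{inlc}. Both evaluate the sums prime by prime, where $i^{\varphi(n)}$ is $0$ or $1$, count the $n-\frac{n}{p}$ residues prime to $p$, and glue with the Chinese Remainder Theorem.

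The substantive difference is the modulus. The paper compares the two sides only in $\mathbb{Z}_{p_j}$. The Chinese Remainder Theorem over the moduli $p_1,\dots,p_k$ then gives the congruence only modulo $\rad(n)$. When $p_j^2\mid n$ the check at $p_j$ even degenerates to $0\equiv 0$; for $n=4$, agreement modulo $2$ says nothing modulo $4$.

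You instead work modulo $p^{r}\,\|\,n$. You use $r\le\varphi(p^{r})\le\varphi(n)$ for the multiples of $p$ and $\varphi(p^{r})\mid\varphi(n)$ for the units; both facts already appear in the Section~1 proof that $a^{\varphi(n)}$ is idempotent. You add $p^{r}\mid\frac{n}{q}$ for $q\neq p$ on the right-hand side. This is exactly what the gluing step needs to give the statement modulo $n$, so your version proves the non-square-free case that the paper's argument only asserts.

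As you note, both sides of Theorem~\ref{inlc} also reduce to the exact integer $n-\frac{n}{p}$ in the paper's computation. So the same upgrade from $p$ to $p^{r}$ repairs that theorem verbatim.
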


\section{A New Arithmetic Function and Characteri\-zation of Giuga Numbers}

Giuseppe Giuga defined the numbers bearing his name as those that are compo\-site and for which each of their prime factors $p_i$ satisfies $p_i\mid\left(\frac{n}{p_i}-1\right)$ or, equivalently, $p_i^2\mid (n-p_i)$ \cite{borwein1996giuga}. From this definition, it is easy to prove certain elementary properties, such as that a Giuga number must always be square-free and cannot be a semiprime either. The first Giuga numbers are $30,$ $858,$ $1722,$ $66198,$ $2214408306,$ and $24423128562,$ corresponding to sequence A007850 in the OEIS.

Giuga himself proved \cite{giuga1950presumibile} that a number $n$ is a Giuga number if and only if it satisfies the equation

$$\sum_{i=1}^n{i^{\varphi(n)}}\equiv-1\pmod{n},$$

and this relates these numbers very closely to everything we have developed here. For example, Corollary \ref{semiprimo} is an immediate and alternative proof that no semiprime number can be a Giuga number. We have also found a new characterization of these numbers thanks to Theorem \ref{inlc}:

\begin{cor}
Let $n\in\mathbb{N}$ be a composite number; then $n$ is a Giuga number if and only if

$$\sum_{d\mid n}{\varphi (d)\cdot \left(\frac{n}{d}\right)^{\varphi (n)}}\equiv-1\pmod{n}.$$

\end{cor}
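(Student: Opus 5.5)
This corollary follows by chaining Giuga's characterization with Theorem \ref{inlc}. The first ingredient is Giuga's theorem \cite{giuga1950presumibile}, quoted just above the statement: a composite $n$ is a Giuga number if and only if
$$\sum_{i=1}^n{i^{\varphi(n)}}\equiv-1\pmod{n}.$$
The second ingredient is Theorem \ref{inlc}, which holds for every $n\in\mathbb{N}$ with no square-free hypothesis. It identifies the left-hand side modulo $n$ with the divisor sum:
$$\sum_{i=1}^n{i^{\varphi (n)}}\equiv\sum_{d\mid n}{\varphi (d)\left(\frac{n}{d}\right)^{\varphi (n)}}\pmod{n}.$$

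For the forward direction, let $n$ be a Giuga number. Giuga's criterion makes the power sum congruent to $-1$ modulo $n$. Theorem \ref{inlc} then transfers this congruence to the divisor sum.

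For the reverse direction, let $n$ be composite and suppose the divisor sum is congruent to $-1$ modulo $n$. Reading Theorem \ref{inlc} backwards, the power sum is congruent to $-1$ modulo $n$. Giuga's criterion then shows that $n$ is a Giuga number.

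There is no genuine obstacle. The one point to check is that Theorem \ref{inlc} is applied in full generality, so we need not assume in advance that $n$ is square-free. This matters because square-freeness of Giuga numbers is a consequence of the definition, not something we know beforehand in the reverse direction. Compositeness is needed only because Giuga's criterion is stated for composite $n$; without it, every prime would satisfy the congruence by Fermat's Little Theorem.

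As a cross-check, one could also argue prime by prime, as in the proof of Theorem \ref{inlc}. The divisor sum is congruent to $-\frac{n}{p_j}$ modulo each $p_j$. It is therefore congruent to $-1$ modulo $p_j$ exactly when $p_j\mid\frac{n}{p_j}-1$. This is Giuga's original definition, once square-freeness is extracted from it, since $p_j^2\mid n$ would force $p_j\mid\frac{n}{p_j}$, which is incompatible with $p_j\mid\frac{n}{p_j}-1$.
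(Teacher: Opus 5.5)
Your proposal is correct and follows the paper's own route: the paper obtains this corollary by combining Giuga's criterion $\sum_{i=1}^n i^{\varphi(n)}\equiv -1 \pmod n$ with Theorem \ref{inlc}, which holds without any square-free hypothesis, exactly as you do. Your prime-by-prime cross-check is also sound, because $p_j\mid\frac{n}{p_j}-1$ forces square-freeness before the Chinese Remainder Theorem is applied; it even gives a self-contained proof from Giuga's definition that does not need Giuga's theorem.
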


All prime numbers trivially satisfy the congruence written above, and so Giuga numbers can in a sense be regarded as false candidates for a characteri\-zation of primality involving sums of powers. In fact, the Agoh–Giuga conjecture will be false if it is possible to find a Giuga number that is also a Carmichael number, and Carmichael numbers, which we will define later, are another well-known example of false candidates for prime numbers via a test, in this case Fermat’s Little Theorem \cite{riesel1994prime}. Both numbers also share several common proper\-ties, such as having at least three prime factors and being square-free, as well as a similar criterion for divisibility by factors.

The calculations we have performed motivate us to consider an arithmetic function that expresses these sums, which we will now define.

\begin{defn}
Given $n,k\in\mathbb{N}$, we define the arithmetic function $$G_k(n)=\sum_{d\mid n}{\varphi(n)\left(\frac{n}{d}\right)^k}.$$
\end{defn}

First, we have that $G_0(n)=\sum_{d\mid n}{\varphi(n)}=n$, and therefore $G_0\equiv Id$. On the other hand, using this new terminology, we have shown that

$$G_{\varphi(n)}(n)\equiv\sum_{i=1}^n{i^{\varphi(n)}}\pmod{n}.$$

Therefore, $n$ is a Giuga number if and only if $G_{\varphi(n)}(n)\equiv -1\pmod{n}.$

If we define the arithmetic function $Id_k(n)=n^k$, then $G_k=\varphi\ast Id_k$, where $\ast$ denotes the Dirichlet convolution. This implies that $G_k$ is multiplicative for $k\geq 0$, since the arithmetic functions $\varphi$ and $Id_k$ are multiplicative. Not only that, but also $G_k(1)=1$, which implies that $G_k$ has an inverse in the sense of this operation for $k\geq 0$. For more details on the Dirichlet convolution, see \cite{sandor2004handbook}.

Knowing also that $Id_k^{-1}=\mu\cdot Id_k$, we have that $\varphi=G_k\ast(\mu\cdot Id_k)$ for all $k\geq 0$. This result is also immediate if the Möbius inversion formula \cite{sandor2004handbook} is applied to the definition of $G_k$. For the special case $k=0$, we obtain

$$\varphi=Id\ast(\mu\cdot \textbf{1})=Id\ast \mu,$$

which is a classical result that can be found, for example, in \cite{sandor2004handbook}, and where we have used the notation that $\textbf{1}(n)=1$ for all $n\in\mathbb{N}$.

Since $G_k$ is a multiplicative function, we want to define it by applying it to elements of the form $p^r$, where $p$ is prime. If $r=1$, we have that

$$G_k(p)=\varphi(1)p^k+\varphi(p)1^k=p^k+p-1.$$

On the other hand, for $k,r>1$ we have that

$$G_k(p^r)=\sum_{d\mid p^r}{\varphi\left(\frac{p^r}{d}\right)d^k}=\sum_{j=0}^r{\varphi\left(p^{r-j}\right)\cdot p^{j^k}}=$$

$$=p^{rk}+\sum_{j=0}^{r-1}{\varphi\left(p^{r-j}\right)\cdot p^{jk}}=p^{rk}+(p-1)p^{r-1}\sum_{j=0}^{r-1}{\left(p^{k-1}\right)^j}=$$

$$=p^{rk}+(p-1)p^{r-1}\cdot\frac{p^{rk-r}-1}{p^{k-1}-1}.$$

For the case $k=1$, the penultimate step is equal to

$$p^r+(p-1)\sum_{j=0}^{r-1}{p^{r-1}}=p^r+r(p-1)p^{r-1}=p^r+r\varphi\left( p^r\right).$$

The case of greatest interest to us is that of $n=p_1\ldots p_s$ square-free, and for this case we have a very neat expression for the function given by

$$G_k(n)=\prod_{j=1}^s\left(p_j^k+p_j-1\right).$$

In particular, we will almost always focus on the case $k=\varphi(n)$, since thanks to the properties of idempotents we will obtain many identities.

This definition represents a new way of approaching square-free numbers and readily yields new results, but before that, we first formally define what a Carmichael number is. Essentially speaking, a Carmichael number is one of the relatively rare exceptions in which the converse of Fermat’s Little Theorem does not hold and, therefore, prevents the use of this theorem as an absolute primality test.

\begin{defn}
We say that a composite number $n\in\mathbb{N}$ is a Carmichael number if it satisfies the relation $b^n\equiv b\pmod{n}$ for all integers $b$.
\end{defn}

\begin{pro}
Let $n\in\mathbb{N}$ be a square-free number, and $p$ a prime divisor of $n$.
\begin{enumerate}
    \item $\left(G_{\varphi(n)}(p)\right)^2+G_{\varphi(n)}(p)\equiv p^2+p\pmod{n}.$
    \item $p\cdot G_{\varphi(n)}(p)\equiv p^2\pmod{n}.$
    \item $G_{\varphi(n)}(2n)\equiv 2\cdot G_{\varphi(n)}(n)\pmod{n},$ regardless of the parity of $n$.
    \item If $n$ is a Giuga number, $G_{\varphi(n)}\left(\frac{n}{p}\right)$ is a solution to the equation $px^2-x-1\equiv 0 \pmod{n}.$
    \item If $n$ is a Carmichael number, $G_{n-1}(n)\equiv G_{\varphi(n)}(n)\pmod{n}.$
\end{enumerate}
\end{pro}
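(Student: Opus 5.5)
The plan is to work prime by prime. Throughout, $n=p_1\cdots p_s$ is square-free and $N:=\varphi(n)$. By the multiplicative formula established just before the proposition, for any square-free $m$ we have
$$G_N(m)=\prod_{q\mid m}\left(q^N+q-1\right).$$
Each item will be checked modulo every prime $q\mid n$, and the results will be glued together by the Chinese Remainder Theorem, exactly as in Corollary \ref{ilc2} and Theorem \ref{inlc}. The basic input is the fact used repeatedly in Theorem \ref{ilc}:
\begin{itemize}
\item if $q\neq r$ are primes dividing $n$, then $r^N\equiv 1\pmod q$, because $q-1\mid N$;
\item $q^N\equiv 0\pmod q$.
\end{itemize}
Consequently $G_N(p)=p^N+p-1$ satisfies
$$G_N(p)\equiv p\pmod q\ \ (q\neq p),\qquad G_N(p)\equiv p-1\pmod p.$$

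For item 2, I would check $p\,G_N(p)\equiv p^2$ modulo each $q$. For $q\neq p$ this is immediate from $G_N(p)\equiv p$. For $q=p$ both sides vanish. For item 1, I would write
$$G^2+G-p^2-p=(G-p)(G+p+1),\qquad G:=G_N(p).$$
Modulo $q\neq p$ the first factor vanishes. Modulo $p$ the second factor is $\equiv (p-1)+p+1\equiv 0$. Hence the product vanishes modulo every $q$, and therefore modulo $n$.

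For item 3, $2n$ is square-free only when $n$ is odd, so I would split by parity.
\begin{itemize}
\item If $n$ is odd, then $G_N(2n)=G_N(2)\,G_N(n)$ with $G_N(2)=2^N+1$. Since $2^N\equiv 1\pmod n$ by Euler, $G_N(2)\equiv 2$ and the claim follows.
\item If $n$ is even, $2n$ contains $2^2$, so I would use the prime-power formula to compute $G_N(4)=2^{2N}+\varphi(2)2^{N}+\varphi(4)=2^{2N}+2^N+2$, and $G_N(2n)=G_N(4)\,G_N(n/2)$. Then I compare with $2G_N(n)=2(2^N+1)G_N(n/2)$ modulo each odd $q\mid n$ and modulo $2$. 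Modulo an odd $q$, both $G_N(4)$ and $2(2^N+1)$ are $\equiv 4$. Modulo $2$, both sides are even.
\end{itemize}
This parity bookkeeping, in particular getting $G_N(4)$ right, is the step most likely to be the fiddly one.

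For item 4, I set $x=G_N(n/p)=\prod_{q\neq p}(q^N+q-1)$.
\begin{itemize}
\item Modulo $p$: every factor is $\equiv 1+q-1=q$, so $x\equiv n/p\equiv 1$ by the Giuga condition $p\mid n/p-1$. Then $px^2-x-1\equiv -2$. This is not zero, so I must recheck the intended reading of the statement.
\item Modulo $r\neq p$: the factor with $q=r$ is $\equiv r-1$ and the others are $\equiv q$. This gives $x\equiv (r-1)\,n/(pr)$. Using $n/r\equiv 1\pmod r$, $x\equiv (r-1)p^{-1}\equiv -p^{-1}$, so $px^2-x-1\equiv p^{-1}+p^{-1}-1$, again not obviously $0$.
\end{itemize}
So I would first carefully recompute these residues, since the plan is exactly this CRT check. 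The main obstacle is confirming which normalization of $G$ makes item 4 hold; the paper's definition writes $\varphi(n)$ where $\varphi(d)$ is presumably meant, so I would re-derive the residues of $G_N(n/p)$ with care before concluding.

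For item 5, $n$ is Carmichael, so by Korselt's criterion $p-1\mid n-1$ for each $p\mid n$. Hence $q^{n-1}\equiv 1\pmod r$ for primes $q\neq r$ dividing $n$, and $q^{n-1}\equiv 0\pmod q$. Thus $q^{n-1}$ and $q^{N}$ have the same residues modulo every prime of $n$, so they are congruent modulo $n$. Substituting into the product formula gives $G_{n-1}(n)\equiv G_N(n)\pmod n$.
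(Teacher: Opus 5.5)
Your arguments for items 1, 2, 3 and 5 are correct. You check residues prime by prime and glue them with the Chinese Remainder Theorem. This carries the same information as the paper's algebra with the idempotent $p^{\varphi(n)}$, namely $(p^{\varphi(n)})^2\equiv p^{\varphi(n)}$ and $p\cdot p^{\varphi(n)}\equiv p$. For item 5, your observation that $q^{n-1}\equiv q^{\varphi(n)}\pmod n$ for every prime $q\mid n$ is a tidier route than the paper's comparison through Corollary \ref{ilc2}.

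The gap is item 4, and the fix is not a renormalization of $G$. Your residues are correct: $x\equiv 1\pmod p$, and $x\equiv -p^{-1}\pmod r$ for primes $r\neq p$. They refute item 4 as printed whenever $p$ is odd:
\begin{itemize}
\item for $n=30$, $p=5$, you get $x=G_8(6)\equiv 1\pmod{30}$ and $px^2-x-1\equiv 3\pmod{30}$;
\item for $p=3$, you get $x\equiv 13$ and $px^2-x-1\equiv 13\pmod{30}$.
\end{itemize}
You already used the product formula $G_N(m)=\prod_{q\mid m}(q^N+q-1)$, which is the one the paper itself uses. The $\varphi(n)$ in the displayed definition is only a typo for $\varphi(d)$ and plays no role here.

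The error is in the paper's last step. From $1\equiv\bigl(p+(p-1)G_N(p)\bigr)x^2$ together with $G_N(p)\,x=G_N(n)\equiv -1$, one gets $1\equiv px^2-(p-1)x$, not $px^2-x$; the factor $p-1$ has been dropped. So the true statement is that $x=G_{\varphi(n)}(n/p)$ solves $px^2-(p-1)x-1\equiv 0\pmod n$. This agrees with the printed equation only for $p=2$. Your residues prove the corrected version immediately: modulo $p$ it reads $0+1-1=0$, and modulo $r\neq p$ it reads $p^{-1}+(p-1)p^{-1}-1=0$. Rather than leaving the step open as a normalization question, you should state the counterexample and prove the corrected item.
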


\begin{proof} We solve each of the above statements one by one:

\begin{enumerate}
    \item $\left(G_{\varphi(n)}(p)\right)^2\equiv\left( p^{\varphi(n)}+p-1\right)^2\equiv \left(p^{\varphi(n)}\right)^2+p^2+1+2p\cdot p^{\varphi(n)}-2p^ {\varphi(n)}-2p\equiv p^{\varphi(n)}+p^2+1+2p-2p^{\varphi(n)}-2p\equiv -p^{\varphi(n)}+p^2+1\equiv p^2+p-\left( p^{\varphi(n)} +p-1\right)\equiv p^2+p-G_{\varphi(n)}(p)\pmod{n},$ where we have used the fact that $p^{\varphi(n)}$ is an idempotent in $\mathbb{Z}_n$ and acts as the identity element multiplied by $p$ in $\mathbb{Z}_n$.
    \item $p^{\varphi(n)}\cdot \left(p^{\varphi(n)}+p-1\right)\equiv \left(p^{\varphi(n)}\right)^2+p\cdot p^{\varphi(n)}-p^ {\varphi(n)}\equiv p\pmod{n}$, and therefore $p\cdot p^{\varphi(n)}\cdot G_{\varphi(n)}(p)\equiv p\cdot G_{\varphi(n)}(p)\equiv p^2\pmod{n}.$
    \item We note that $G_{\varphi(n)}(2)=2^{\varphi(n)}+1$ and that $G_{\varphi(n)}(4)=2^{2\varphi(n)}+2^{\varphi(n)}+2.$ If we assume $n=2\cdot p_2\ldots p_s$,

    $$G_{\varphi(n)}(2n)\equiv G_{\varphi(n)}(4)\cdot G_{\varphi(n)}(p_2\ldots p_s)\equiv$$ $$\equiv\left(2^{2\varphi(n)}+2^ {\varphi(n)}+2\right)\cdot G_{\varphi(n)}(p_2\ldots p_s)\equiv$$ $$\equiv 2\left(2^{\varphi(n)}+1\right)\cdot G_{\varphi(n)}(p_2\ldots p_s)\equiv 2\cdot G_{\varphi(n)}(n)\pmod{n}.$$

    For the odd case, it is trivial, since $n$ and $2$ have no common factors, that $G_{\varphi(n)}(2n)\equiv G_{\varphi(n)}(2)\cdot G_{\varphi(n)}(n)\equiv \left(2^{\varphi(n)}+1\right)\cdot G_{\varphi(n)}(n)\equiv 2 \cdot G_{\varphi(n)}(n)\pmod{n}$ by Euler’s Theorem.

    \item We know that $$1\equiv\left( G_{\varphi(n)}(n)\right)^2\equiv\left( G_{\varphi(n)}(p)\right)^2\left( G_{\varphi(n)}\left(\frac{n}{p}\right)\right)^2\equiv$$ $$\equiv\left( p+(p-1)\cdot G_{\varphi(n)}(p)\right)\cdot\left( G_{\varphi(n)}\left(\frac{n}{p}\right)\right)^2\pmod{n},$$ using the previous properties 1 and 2 and the fact that $n$ is a Giuga number. We therefore have that $$1\equiv p\cdot \left( G_{\varphi(n)}(\frac{n}{p})\right)^2-\left( G_{\varphi(n)}(\frac{n}{p})\right)\pmod{n}, $$ and thus $\left( G_{\varphi(n)}(\frac{n}{p})\right)$ is a solution to the equation $px^2-x-1\equiv 0\pmod{n}.$
    \item This is an immediate result if we use Korselt’s criterion \cite{wright2015variants}, which states that a number $n$ is a Carmichael number if and only if it is square-free and for every prime divisor of $n$, $(p-1)\mid (n-1).$ Knowing this,

$$G_{n-1}(n)\equiv \prod_{j=1}^s\left(p_j^{n-1}+p_j-1\right)\equiv -1\cdot \prod_{j=2}^s{\left(\left(p_j^{p_1-1}\right)^k+p_j-1\right)}\equiv$$ $$\equiv -1\cdot \prod_{j=2}^s{(1+p_j-1)}\equiv -\frac{n}{p_1}\pmod{p_1}.$$ Since we can do this for every $p_i\mid n,$ by Theorem \ref{ilc2} we have arrived at

$$G_{n-1}(n)\equiv -\sum_{p\mid n}{\frac{n}{p}}\equiv G_{\varphi(n)}(n)\pmod{n}.$$
\end{enumerate}
\end{proof}

\begin{cor}
The final result implies that if a number is both a Carmichael and a Giuga number, then it is a counterexample to the Agoh–Giuga Conjecture, a classic result in the study of this conjecture that has been obtained here via an alternative route. The converse of this result is not true even among square-free numbers. The first square-free number satisfying $G_{n-1}(n)\equiv G_{\varphi(n)}(n)\pmod{n}$ is 65, and it is not a Carmichael number. We will explore this issue further in the next section.
\end{cor}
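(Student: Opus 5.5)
The plan splits into the positive claim and the counterexample. For the positive claim, suppose $n$ is both a Carmichael and a Giuga number. I will compare $\sum_{i=1}^n i^{n-1}$ with $\sum_{i=1}^n i^{\varphi(n)}$ one prime at a time and then glue the results with the Chinese Remainder Theorem, exactly as in the proofs of Corollary \ref{ilc2} and Theorem \ref{inlc}.

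By Korselt's criterion (quoted in item 5 of the preceding Proposition), $n$ is square-free and $p-1\mid n-1$ for every prime $p\mid n$. Fix such a $p$. If $p\mid i$ then $i^{n-1}\equiv 0\pmod p$, since $n-1\geq 1$. If $p\nmid i$ then $i^{n-1}\equiv 1\pmod p$ by Fermat's Little Theorem, because $p-1\mid n-1$. Hence
$$\sum_{i=1}^n i^{n-1}\equiv n-\frac{n}{p}\equiv -\frac{n}{p}\pmod p.$$
This is the same residue found for $\sum_{i=1}^n i^{\varphi(n)}$ in the proof of Theorem \ref{inlc}. By the Chinese Remainder Theorem,
$$\sum_{i=1}^n i^{n-1}\equiv -\sum_{p\mid n}\frac{n}{p}\equiv \sum_{i=1}^n i^{\varphi(n)}\pmod n.$$
Equivalently, this is item 5, $G_{n-1}(n)\equiv G_{\varphi(n)}(n)$, read through Corollary \ref{ilc2}. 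Since $n$ is a Giuga number, the right-hand side is $\equiv -1\pmod n$ by Giuga's characterization. So the composite number $n$ satisfies $\sum_{i=1}^{n-1} i^{n-1}\equiv -1\pmod n$, which makes it a counterexample to the Agoh–Giuga conjecture.

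For the failure of the converse, I will verify the claim about $65=5\cdot 13$ directly, using the product formula $G_k(n)=\prod_j(p_j^k+p_j-1)$. The target value is $G_{\varphi(65)}(65)\equiv -\frac{65}{5}-\frac{65}{13}\pmod{65}$, that is, $\equiv 2\pmod 5$ and $\equiv 8\pmod{13}$.
\begin{itemize}
\item Modulo $5$: the factor for $p=5$ is $5^{64}+4\equiv -1$. The factor for $13$ is $13^{64}+12\equiv 3^{64}+2\equiv 3$. Their product is $-3\equiv 2$, which matches.
\item Modulo $13$: the factor for $p=13$ is $\equiv -1$. Since $5^2\equiv -1\pmod{13}$, we have $5^{64}\equiv 1$, so the factor for $5$ is $5^{64}+4\equiv 5$. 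Their product is $-5\equiv 8$, which matches.
\end{itemize}
Thus $G_{64}(65)\equiv G_{48}(65)\pmod{65}$. On the other hand, $65$ is not a Carmichael number: Korselt fails because $12\nmid 64$, and in any case Carmichael numbers have at least three prime factors.

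The main obstacle is the minimality claim, that $65$ is the \emph{first} such number. Primes satisfy the congruence trivially, since both sides equal $p^{p-1}+p-1\equiv -1$, so "first" must be read among composite square-free $n$. I see no structural argument for minimality, so I would settle it by a finite check of the composite square-free $n<65$. To make the check mechanical, I would reduce the congruence modulo each $p\mid n$ to the condition
$$\prod_{q\mid n,\ q\neq p}\left(q^{n-1}+q-1\right)\equiv \prod_{q\mid n,\ q\neq p} q\pmod p.$$
Each instance then only requires the orders of small residues modulo small primes.
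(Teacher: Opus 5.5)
Your proof is correct. For the positive claim, the paper gives no separate argument. It relies on item 5 of the preceding Proposition, where $G_{n-1}(n)=\prod_j\bigl(p_j^{n-1}+p_j-1\bigr)$ is evaluated modulo each prime divisor via Korselt's criterion. That gives $G_{n-1}(n)\equiv-\sum_{p\mid n}\frac{n}{p}\equiv G_{\varphi(n)}(n)\pmod n$, from which the paper reads off the Agoh--Giuga consequence. That last step tacitly identifies $G_{n-1}(n)$ with the Giuga sum $\sum_{i=1}^{n-1}i^{n-1}$, whereas the paper only established the link between $G_k(n)$ and $\sum_i i^k$ for $k=\varphi(n)$.

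You run the same prime-by-prime Korselt/Fermat/CRT argument on the power sum itself. So you land directly on the Agoh--Giuga congruence and supply the bridge the paper leaves implicit: for Carmichael $n$, $i^{n-1}\equiv i^{\varphi(n)}\pmod n$ term by term. Your computation therefore does not use item 5 at all, and calling it ``equivalently item 5'' is a little loose. Item 5 is the parallel statement about the value $G_{n-1}(n)$, and the paper's $G_k$ framing pays off in the converse discussion, since that is the congruence whose converse is at issue.

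On the converse, the paper merely asserts the facts about $65$. Your verification of $G_{64}(65)\equiv G_{48}(65)\pmod{65}$ is correct. Your remark that primes satisfy the congruence trivially, so that ``first'' must mean first composite, is apt. Your reduction also makes the minimality check short:
\begin{itemize}
\item For $n=pq$ the condition reads $q^{n-1}\equiv1\pmod p$ and $p^{n-1}\equiv1\pmod q$.
\item $n=2q$ fails since $2^{2q-1}\equiv2\pmod q$.
\item $n=3q$ with $q\geq5$ fails since $3^{3q-1}\equiv9\pmod q$.
\item $n=35$ and $n=55$ fail since $7^{34}\equiv4\pmod 5$ and $5^{54}\equiv9\pmod{11}$.
\item $n=30$ and $n=42$ fail modulo $3$: for even $n$ the factor $2^{n-1}+1$ vanishes there, while $\prod_{q\neq3}q$ does not.
\end{itemize}
This confirms the paper's claim.
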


\section{Idempotence in Giuga and Carmichael num\-bers. SC numbers and the Agoh–Giuga con\-jecture}

The way we have defined $G_{\varphi(n)}(n)$ shows that, for square-free values, it can be understood not only as an arithmetic function of the prime divisors of $n$, but also as a function of a special type of idempotents that we will now define. We will use these idempotents to relate them to Carmichael and Giuga numbers.

\begin{defn}
Let $n\in\mathbb{N}$. We say that an idempotent $e\in\mathbb{Z}_n$ is prime if it is of the form $e\equiv p^{\varphi(n)}\pmod{n}$, where $p$ is a prime number and a divisor of $n$.
\end{defn}

Within the subgroup of idempotents of $\mathbb{Z}_n$ under multiplication, the set $I_p(n)$ of prime idempotents of $\mathbb{Z}_n$ under multiplication generates all others except the identity element $1$, and does so in a minimal way; that is, if we remove an element from this set, it no longer generates them. On the other hand, although the definition can be given for any natural number, we are specifically interested in the square-free case.

We also have the following lemma, which allows us, given an idempotent of $\mathbb{Z}_n$, to recover the information regarding which divisor of $n$ generates it.

\begin{lem}\label{idp}
Let $n\in\mathbb{N}$, and let $e$ be an idempotent in $\mathbb{Z}_n$. Then $e\equiv d^{\varphi(n)}\pmod{n}$, where $d=\gcd(n,e)$. Furthermore, $d$ is unitary.
\end{lem}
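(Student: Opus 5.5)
The plan is to work prime-power by prime-power and glue the pieces together with the Chinese Remainder Theorem, as in the proofs of Theorem \ref{ilc} and Theorem \ref{inlc}. Write $n=p_1^{r_1}\cdots p_k^{r_k}$. Since $e^2\equiv e\pmod{n}$, we have $p_i^{r_i}\mid e(e-1)$ for every $i$. Because $\gcd(e,e-1)=1$, the full power $p_i^{r_i}$ divides exactly one of $e$ and $e-1$. So for each $i$, either $e\equiv 0\pmod{p_i^{r_i}}$ or $e\equiv 1\pmod{p_i^{r_i}}$; this is the same dichotomy used in the proof that $\mathbb{Z}_n$ has $2^k$ idempotents.

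Next I identify $d=\gcd(n,e)$. Let $I=\{i : e\equiv 0 \pmod{p_i^{r_i}}\}$.
\begin{itemize}
\item If $i\in I$, then $p_i^{r_i}\mid e$, so $p_i^{r_i}\mid d$.
\item If $i\notin I$, then $e\equiv 1\pmod{p_i}$, so $p_i\nmid e$ and hence $p_i\nmid d$.
\end{itemize}
Hence $d=\prod_{i\in I}p_i^{r_i}$. This also settles unitarity: $d$ and $n/d=\prod_{i\notin I}p_i^{r_i}$ share no prime, so $\gcd(d,n/d)=1$.

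It remains to show $d^{\varphi(n)}\equiv e\pmod{p_i^{r_i}}$ for each $i$, after which the Chinese Remainder Theorem gives the congruence modulo $n$.
\begin{itemize}
\item For $i\notin I$: $\gcd(d,p_i)=1$ and $\varphi(p_i^{r_i})\mid\varphi(n)$, so Euler's Theorem (\ref{euler}) gives $d^{\varphi(n)}\equiv 1\equiv e\pmod{p_i^{r_i}}$.
\item For $i\in I$: $d^{\varphi(n)}$ is divisible by $p_i^{r_i\varphi(n)}$. Since $\varphi(n)\ge 1$, this is a multiple of $p_i^{r_i}$, so $d^{\varphi(n)}\equiv 0\equiv e\pmod{p_i^{r_i}}$.
\end{itemize}

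The only place needing care is the degenerate cases, and neither needs a separate argument. For $e=0$, read as the residue $n$ or $0$, we get $d=n$ and $n^{\varphi(n)}\equiv 0$. For $e=1$, we get $d=1$. I expect the main subtlety to be the first step: one must use that $e$ and $e-1$ are coprime so that the whole $p_i^{r_i}$, and not just $p_i$, divides one of them. Without that, the description of $d$ as a product of full prime powers could fail in the non-square-free case.
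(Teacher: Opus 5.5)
Your proof is correct, but it takes a different route from the paper's.

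\textbf{The paper's route.} It does not argue from the definition of idempotence. It invokes the structural fact stated (and only sketched) in Section 3: every idempotent already has the form $e\equiv a^{\varphi(n)}\pmod{n}$ for some unitary divisor $a$. From $e=a^{\varphi(n)}+kn$ it gets $a\mid e$, hence $a\mid d$. It then writes $d=af$ and compares $d^{\varphi(n)}$ with $a^{\varphi(n)}$ modulo the two coprime factors $a$ and $n/a$, gluing with the Chinese Remainder Theorem.

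\textbf{Your route.} You work locally at every prime power. Using $\gcd(e,e-1)=1$ you get $e\equiv 0$ or $1\pmod{p_i^{r_i}}$. This lets you compute $d=\prod_{i\in I}p_i^{r_i}$ explicitly, so unitarity is immediate rather than inherited from the auxiliary divisor $a$.

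\textbf{What each buys.} The paper's version is shorter once the structure theorem is granted, and it is framed as recovering, via $\gcd(n,e)$, the divisor that generates $e$. Yours has three advantages:
\begin{itemize}
\item It is self-contained.
\item It re-derives that structure theorem as a by-product: every idempotent equals $d^{\varphi(n)}$ for a unitary $d$.
\item It makes explicit the step the paper leaves unjustified when it asserts $\gcd\left(f,\frac{n}{a}\right)=1$. That step needs $e\equiv a^{\varphi(n)}\equiv 1\pmod{n/a}$, so $d$ cannot pick up any prime factor of $n/a$. In your argument this is exactly the case $i\notin I$.
\end{itemize}
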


\begin{proof}
We know that $e\equiv a^{\varphi(n)}\pmod{n}$ for a unitary divisor $a$ of $n$. Let us show that precisely $\gcd(n,e)=a$. First, $e-a^{\varphi(n)}=kn$, so $e=a^{\varphi(n)}+kn$ with $a\mid n$. Therefore, $a\mid e$ and $a\mid d=\gcd(n,e)$.

We can thus write $d=af,$ with $f\in\mathbb{N},$ $\gcd\left(f,\frac{n}{a}\right)=1.$ On the one hand,

$$d^{\varphi(n)}\equiv a^{\varphi(n)}f^{\varphi(n)}\equiv a^{\varphi(n)}\equiv e\pmod{\frac{n}{a}}$$

and furthermore $d^{\varphi(n)}\equiv 0\pmod{a}.$ On the other hand, $a^{\varphi(n)}\equiv e\pmod{\frac{n}{a}}$ and $a^{\varphi(n)}\equiv 0\pmod{a}. $ Therefore, by the uniqueness of the Chinese Remainder Theorem, since $\gcd\left(a,\frac{n}{a}\right)=1,$ we obtain

$$a^{\varphi(n)}\equiv d^{\varphi(n)}\pmod{n}.$$
            
\end{proof}

In the case of a square-free $n$, the prime idempotents are identified as those satisfying $\gcd(n,e)=p$, where $p$ is prime. By definition, the idempotents appearing in the explicit formula for $G_{\varphi(n)}(n)$ are all prime when $n$ is square-free, which leads us to the following alternative expression for such numbers:

$$G_{\varphi(n)}(n)=\prod_{e\in I_p(n)}{(e+\gcd(n,e)-1)}.$$

However, we can further specify the form of these idempotents in the case of Giuga numbers. 

\begin{thm}
Let $n\in\mathbb{N}$; $n$ is a Giuga number if and only if the prime idempotents of $\mathbb{Z}_n$ are the set 

$$I_p=\left\{1-\frac{n}{p}, p\text{ prime, }p\mid n\right\}$$
\end{thm}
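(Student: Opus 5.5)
The plan is to compare, for each prime $p\mid n$, the prime idempotent $p^{\varphi(n)}$ with the element $1-\frac{n}{p}$ via the Chinese Remainder Theorem, one prime modulus at a time. We must interpret the statement as: $p^{\varphi(n)}\equiv 1-\frac{n}{p}\pmod{n}$ for every prime $p\mid n$. Since Giuga numbers are square-free, and the condition itself will force square-freeness, the plan is to reduce everything to conditions modulo each prime $q\mid n$.

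First, assume $n$ is square-free, and fix primes $p,q\mid n$. Modulo $q\neq p$ we have $p^{\varphi(n)}\equiv 1$ by Euler's Theorem (\ref{euler}), using $\varphi(q)\mid\varphi(n)$. Likewise $1-\frac{n}{p}\equiv 1\pmod q$, since $q\mid \frac{n}{p}$. So the two sides always agree modulo every $q\neq p$.

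Modulo $p$ itself, $p^{\varphi(n)}\equiv 0$, while $1-\frac{n}{p}\equiv 0\pmod p$ holds exactly when $p\mid \frac{n}{p}-1$. This is precisely Giuga's defining condition at $p$. By uniqueness in the Chinese Remainder Theorem for square-free $n$, we get $p^{\varphi(n)}\equiv 1-\frac{n}{p}\pmod n$ for all $p\mid n$ if and only if $p\mid\frac{n}{p}-1$ for all $p\mid n$. This is the Giuga condition, with compositeness handled separately below.

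Two loose ends remain, and the square-free reduction is where I expect the real care is needed. If $n$ is not square-free, say $p^2\mid n$, then $p\mid\frac{n}{p}$. Hence $1-\frac{n}{p}\equiv 1\pmod p$, whereas $p^{\varphi(n)}\equiv 0\pmod p$, so the set of prime idempotents cannot equal the stated set. This excludes non-square-free $n$, matching the fact that Giuga numbers are square-free.

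For primes $n=p$, the set $\{1-1\}=\{0\}$ coincides with $\{p^{p-1}\}=\{0\}$ in $\mathbb{Z}_p$. So primes satisfy the condition trivially, and I would read the theorem for composite $n$, as Giuga numbers are defined to be composite.

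Finally, I would remark that equality of the sets, rather than termwise equality, suffices. By Lemma \ref{idp}, the idempotent $1-\frac{n}{p}$ has $\gcd(n,\cdot)=p$, so it can only match $p^{\varphi(n)}$, and the set equality is therefore equivalent to the termwise congruences used above.
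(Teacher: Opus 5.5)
Your argument is correct, and it reaches the result by a more direct route than the paper. The paper does not compare residues prime by prime.

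For the forward direction, the paper uses $\gcd(p,\frac{n}{p})=1$ and $p\mid\frac{n}{p}-1$ to show that $\frac{n}{p}$, and hence $1-\frac{n}{p}$, is idempotent. It then checks $\gcd(n,1-\frac{n}{p})=p$ and lets Lemma \ref{idp} identify $1-\frac{n}{p}$ with $p^{\varphi(n)}$. For the converse, it uses only that $1-\frac{n}{p}$ is \emph{some} idempotent, so $\frac{n}{p}(\frac{n}{p}-1)=kn$ and hence $\frac{n}{p}-1=kp$.

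Since Lemma \ref{idp} is itself a CRT argument, the two proofs rest on the same residue facts. Yours, however, packages both directions into the single equivalence $p^{\varphi(n)}\equiv 1-\frac{n}{p}\pmod n\iff p\mid\frac{n}{p}-1$ without any auxiliary lemma. You also correctly notice that primes (and $n=1$) satisfy the set condition, so the statement needs $n$ composite. The paper's converse silently assumes this when it concludes that $n$ is Giuga.

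The paper's route buys something else. Its converse needs only idempotence of each $\frac{n}{p}$, which is exactly Corollary \ref{giugacor}. It also never needs a square-free case split, since $\frac{n}{p}-1=kp$ forces square-freeness. Your converse doesn't need the split either: reducing the termwise congruence modulo $p$ alone gives $p\mid\frac{n}{p}-1$.

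One small repair is needed in your last paragraph. Asserting $\gcd(n,1-\frac{n}{p})=p$ presupposes what the converse is meant to prove, and it is false when $p^2\mid n$. The clean reason set equality forces termwise equality is that $1-\frac{n}{p}\equiv 1\pmod q$ for every prime $q\neq p$ dividing $n$, whereas $q^{\varphi(n)}\equiv 0\pmod q$. Hence $1-\frac{n}{p}$ can only equal $p^{\varphi(n)}$. When $p^2\mid n$ it is a unit and equals no prime idempotent at all.
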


\begin{proof}
We start with the implication from left to right. We know \cite{borwein1996giuga} that a number is a Giuga number if and only if $p\mid\left(\frac{n}{p}-1\right)$ for every prime divisor $p$ of $n$. Following a line of reasoning used in the recent article \cite{mondal2024idempotents}, where they classify all idempotents of $\mathbb{Z}_n$, since $\gcd\left(p,\frac{n}{p}\right)=1$ when $n$ is square-free (i.e., $p$ is a unit divisor), we have that

$$\frac{n}{p}\cdot p=n\mid \left[\left(\frac{n}{p}\right)^2-\left(\frac{n}{p}\right)\right]$$

and, therefore, $\frac{n}{p}$ is an idempotent in $\mathbb{Z}_n$. We know that $e=1-\frac{n}{p}$ is also an idempotent in $\mathbb{Z}_n$, and in fact is the prime idempotent associated with the divisor $p$. To see this, it suffices to check that $p\mid e$, which is true since, as $n$ is a Giuga number, $p\mid\left(\frac{n}{p}-1\right)$ and, furthermore, given that $q\not=p$ is a prime divisor of $n$, since $q\mid -\frac{n}{p}$, then $q\nmid e$ and, therefore, $gcd(n,e)=p$, which proves that $1-\frac{n}{p}$ is the prime idempotent associated with $p$ in $\mathbb{Z}_n$.

For the converse implication, we know that $1-\frac{n}{p}$ is a prime idempotent and, in particular, $\frac{n}{p}$ is also an idempotent in $\mathbb{Z}_n$. Then

$$\left(\frac{n}{p}\right)^2\equiv\left(\frac{n}{p}\right)\pmod{n}.$$ 

This implies that

$$\frac{n^2}{p^2}-\frac{n}{p}=kn$$

for some $k\in\mathbb{Z}$, and thus $n-p=kp^2$, which automatically implies that $p^2\mid (n-p)$, and this is equivalent to $p\mid\left(\frac{n}{p}-1\right).$ Therefore, $n$ is a Giuga number.

\end{proof}

\begin{cor}\label{giugacor}
$n\in\mathbb{N}$ is a Giuga number if and only if $\frac{n}{p}$ is an idempotent in $\mathbb{Z}_n$ for every prime divisor $p$ of $n$.
\end{cor}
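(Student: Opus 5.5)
The corollary is essentially a repackaging of the theorem just proved, but it is cleanest to prove it directly from the divisibility characterization of Giuga numbers, $p^2\mid (n-p)$ for every prime $p\mid n$ \cite{borwein1996giuga}, with $n$ composite as the standing hypothesis. The core is a one-line computation. For a prime $p\mid n$, the element $\frac{n}{p}$ is idempotent in $\mathbb{Z}_n$ if and only if
$$n \mid \frac{n}{p}\left(\frac{n}{p}-1\right).$$
Dividing through by $\frac{n}{p}$, this holds if and only if $p\mid \left(\frac{n}{p}-1\right)$, which is equivalent to $p^2\mid (n-p)$. This equivalence is purely arithmetic and holds prime by prime. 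It needs no square-freeness assumption, since the division by $\frac{n}{p}$ is exact.

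For the forward direction I will assume $n$ is Giuga. Then for each prime $p\mid n$ the condition $p\mid(\frac{n}{p}-1)$ holds, so the computation above shows that $\frac{n}{p}$ is idempotent. Alternatively, the preceding theorem says $1-\frac{n}{p}$ is the prime idempotent attached to $p$, and Theorem \ref{orta1-a} then gives that $\frac{n}{p}=1-(1-\frac{n}{p})$ is idempotent.

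For the converse I will assume $\frac{n}{p}$ is idempotent for every prime $p\mid n$. The computation then yields $p\mid(\frac{n}{p}-1)$ for all $p$, and by Giuga's definition $n$ is a Giuga number, given that it is composite. Two points need care. First, $p\mid(\frac{n}{p}-1)$ forces $p\nmid\frac{n}{p}$, so $n$ is automatically square-free and there is no conflict with the earlier standing assumptions. Second, compositeness must be kept explicit. For $n$ prime, $\frac{n}{p}=1$ is trivially idempotent, so the statement should be read for composite $n$, which is part of Giuga's definition.

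I expect no real obstacle here. The only delicate point is the bookkeeping in the equivalence $n\mid \frac{n}{p}(\frac{n}{p}-1)\iff p\mid(\frac{n}{p}-1)$, together with the implicit composite hypothesis. I would state both explicitly rather than rely on the preceding theorem's phrasing about ``the set'' of prime idempotents.
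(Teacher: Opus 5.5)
Your proof is correct and is essentially the paper's argument: the proof of the preceding theorem, from which the corollary is read off, runs the same computation $n\mid\frac{n}{p}\left(\frac{n}{p}-1\right)\iff p\mid\left(\frac{n}{p}-1\right)\iff p^2\mid(n-p)$ in both directions. Your remark that compositeness must be assumed is a correct and worthwhile refinement of the statement as written, since for prime $n$ (or $n=1$) the idempotence condition holds trivially.
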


\begin{cor}

If $n$ is a Giuga number, then $$G_{\varphi(n)}(n)\equiv\prod_{i=1}^s\left(\frac{p_i^2-n}{p_i}\right)\equiv -1\pmod{n},$$ but it is trivial to verify that the converse implication is not true.

\end{cor}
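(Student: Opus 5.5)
Write $n=p_1\ldots p_s$, which is legitimate because a Giuga number is square-free. The plan is to combine the description of $G_{\varphi(n)}(n)$ through prime idempotents with the theorem just proved, and then invoke Giuga's characterization for the final congruence.

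\textbf{Step 1: the factors.} Since $n$ is square-free, $G_{\varphi(n)}(n)=\prod_{i=1}^s\left(p_i^{\varphi(n)}+p_i-1\right)$. By the previous theorem, the prime idempotent attached to $p_i$ is $p_i^{\varphi(n)}\equiv 1-\frac{n}{p_i}\pmod{n}$. This holds because $1-\frac{n}{p_i}$ is an idempotent and $\gcd\left(n,1-\frac{n}{p_i}\right)=p_i$, so Lemma \ref{idp} identifies it with $p_i^{\varphi(n)}$. Substituting gives each factor as
$$p_i^{\varphi(n)}+p_i-1\equiv p_i-\frac{n}{p_i}=\frac{p_i^2-n}{p_i}\pmod{n}.$$
This is an honest integer, and the product of these factors is the first expression in the statement.

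\textbf{Step 2: the value $-1$.} The congruence $G_{\varphi(n)}(n)\equiv-1\pmod{n}$ follows from Theorem \ref{ilc} together with Giuga's characterization $\sum_{i=1}^n i^{\varphi(n)}\equiv-1\pmod{n}$, since $G_{\varphi(n)}(n)\equiv\sum_{i=1}^n i^{\varphi(n)}$.

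Alternatively, one can check it directly prime by prime and conclude with the Chinese Remainder Theorem. Fix $p_j$ and reduce modulo $p_j$:
\begin{itemize}
\item for $i\neq j$, the factor $p_i-\frac{n}{p_i}$ is congruent to $p_i$, because $p_j\mid\frac{n}{p_i}$;
\item for $i=j$, the factor $p_j-\frac{n}{p_j}$ is congruent to $-\frac{n}{p_j}$.
\end{itemize}
So the product is congruent to $-\frac{n}{p_j}\cdot\frac{n}{p_j}\equiv -1\pmod{p_j}$, where the last step uses the Giuga condition $\frac{n}{p_j}\equiv 1\pmod{p_j}$.

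\textbf{Remark on the converse.} The statement also says the converse fails. This needs only a remark: the first congruence uses the Giuga hypothesis through the form of the idempotents, so the product formula itself is not available in general. I expect this step to require only a brief observation rather than an argument. The main point of care is the justification in Step 1, namely that the idempotent $1-\frac{n}{p_i}$ really equals $p_i^{\varphi(n)}$, and this is already provided by the preceding theorem.
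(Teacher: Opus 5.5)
Your Steps 1 and 2 are correct and are essentially the paper's own (implicit) argument. You substitute the prime idempotents $e=1-\frac{n}{p_i}$, with $\gcd(n,e)=p_i$, into $G_{\varphi(n)}(n)=\prod_{e\in I_p(n)}(e+\gcd(n,e)-1)$, then read off $-1$ from Giuga's criterion. Your prime-by-prime CRT check is a good self-contained alternative for the last congruence.

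The gap is the final clause. ``The converse is not true'' asserts a non-implication. Proving it requires an explicit $n$ that is not a Giuga number but satisfies the congruence(s). Your remark gives no such $n$, and its heuristic points the wrong way. For square-free $n=p_1\cdots p_s$, reducing modulo $p_j$ gives
\[
G_{\varphi(n)}(n)\equiv -\frac{n}{p_j},\qquad \prod_{i=1}^s\frac{p_i^2-n}{p_i}\equiv -\left(\frac{n}{p_j}\right)^2 \pmod{p_j}.
\]
Hence $G_{\varphi(n)}(n)\equiv\prod_i\frac{p_i^2-n}{p_i}\pmod n$ holds exactly when $\frac{n}{p_j}\equiv 1\pmod{p_j}$ for every $j$, i.e.\ for composite $n$ exactly when $n$ is Giuga. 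So the fact that the product formula ``is not available in general'' is evidence \emph{for} the converse of the first congruence, not against it. Moreover, for composite $n$ the full chain contains $G_{\varphi(n)}(n)\equiv -1$, which by Giuga's criterion already makes $n$ a Giuga number.

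Consequently the converse can fail in only two ways, and you must exhibit one of them.
\begin{enumerate}
\item Primes satisfy the whole chain. For $n=p$, both $G_{p-1}(p)=p^{p-1}+p-1$ and $\frac{p^2-p}{p}=p-1$ are $\equiv -1\pmod p$, yet Giuga numbers are composite by definition.
\item The congruence $\prod_i\frac{p_i^2-n}{p_i}\equiv -1\pmod n$ on its own is strictly weaker: it is equivalent to $\frac{n}{p_j}\equiv\pm1\pmod{p_j}$ for all $j$. For example, $n=42$ gives $(2-21)(3-14)(7-6)=209\equiv -1\pmod{42}$. But $\frac{42}{3}=14\not\equiv 1\pmod 3$, so $42$ is not a Giuga number (indeed $G_{12}(42)\equiv 1\pmod{42}$).
\end{enumerate}
The paper itself only calls this ``trivial'' and gives no example, but a complete proof of the statement as written needs one.
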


Corollary \ref{giugacor} allows us to study Giuga numbers from the perspective of the idempotents of $\mathbb{Z}_n$. In an article from the last century \cite{chernick1939fermat}, Jack Chernick proves that numbers of the form

$$n=(6k+1)(12k+1)(18k+1)$$

are Carmichael numbers if these three factors are all prime. Using idempo\-tents, it can be shown that none of these numbers (which are not yet known to be infinite, although it is believed that they are) is a Giuga number. 

To do this, we set $p=6k+1$, so that $$\frac{n}{p}=(12k+1)(18k+1)=216k^2+30k+1.$$

This value must be an idempotent of $\mathbb{Z}_n$, but if we check the calculations, we find that

$$\left(\frac{n}{p}\right)^2\equiv 2\left(\frac{n}{p}\right)\pmod{n}$$

regardless of the value of $k\in\mathbb{N},$ so these numbers, if they are Carmichael numbers, can never be Giuga numbers as well. 

These numbers can be extended to those of the form

$$Ch_m(k)=(6k+1)(12k+1)\prod_{i=1}^{m-2}{(9\cdot2^ik+1)},m\geq3,$$

with the condition that, for them to be Carmichael numbers, each factor must be prime and $k$ must be divisible by $2^{m-4}$ \cite{ribenboim2012new}. Following a similar line of reasoning, it can be shown that $$P(k)=\frac{Ch_m(k)}{6k+1}$$ should be an idempotent of $Ch_m(k)$, but this is not the case since $6k+1$ is not a factor of $P(k)$ or of $P(k)-1$, and therefore it cannot be an idempotent in $\mathbb{Z}_{6k+1}$ and, consequently, it cannot be one in $\mathbb{Z}_{Ch_m(k)}$ either.

Let’s try to understand what the prime idempotents of a Carmichael number are like. To do this, we have this simple proposition.

\begin{pro}\label{carm}
Let $n\in\mathbb{N}$ be a Carmichael number. The prime idempotents of $\mathbb{Z}_n$ are the numbers of the form $p^{n-1}$, where $p$ is a prime divisor of $n$.
\end{pro}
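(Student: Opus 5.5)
The plan is to show that for a Carmichael number $n$ and each prime $p\mid n$ we have $p^{n-1}\equiv p^{\varphi(n)}\pmod{n}$. By definition the left-hand side of this congruence is the prime idempotent attached to $p$, so the two descriptions coincide. Since $n$ is square-free (Korselt's criterion, already invoked in the previous proposition), the Chinese Remainder Theorem reduces this to checking the congruence modulo each prime divisor $q$ of $n$.

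Fix a prime $q\mid n$ and split into two cases.

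\emph{Case $q=p$.} Both exponents are positive, since $n-1\geq 1$ and $\varphi(n)\geq 1$. Hence both $p^{n-1}$ and $p^{\varphi(n)}$ are $\equiv 0\pmod{p}$.

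\emph{Case $q\neq p$.} Here $\gcd(p,q)=1$. Korselt's criterion gives $(q-1)\mid(n-1)$, so Fermat's Little Theorem yields $p^{n-1}\equiv 1\pmod{q}$. On the other side, $(q-1)=\varphi(q)\mid\varphi(n)$, so also $p^{\varphi(n)}\equiv 1\pmod{q}$, exactly as in the proof of Theorem \ref{ilc}.

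Combining the two cases over all $q\mid n$ with the uniqueness in the Chinese Remainder Theorem (valid because $n$ is square-free) gives $p^{n-1}\equiv p^{\varphi(n)}\pmod{n}$. Therefore $I_p(n)=\{p^{n-1} : p \text{ prime},\ p\mid n\}$.

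As a sanity check, one can also confirm directly that $p^{n-1}$ is idempotent. By the case analysis it reduces to $0$ or $1$ modulo each prime $q$, hence $(p^{n-1})^2\equiv p^{n-1}$ modulo each $q$, and so modulo $n$. By Lemma \ref{idp} its associated divisor is $\gcd(n,p^{n-1})=p$, which again identifies it as the prime idempotent for $p$.

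There is no real obstacle here. The only point that needs care is that Korselt's criterion supplies both facts the argument uses: square-freeness, which is needed for the Chinese Remainder Theorem to apply with prime moduli, and $(q-1)\mid(n-1)$, which is what turns the exponent $n-1$ into a multiple of $q-1$.
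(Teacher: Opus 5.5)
Your proof is correct, but it takes a different route from the paper. The paper never compares with $p^{\varphi(n)}$ directly. It first checks idempotence in one line from the defining property of Carmichael numbers, $(p^{n-1})^2\equiv p^{n}\cdot p^{n-2}\equiv p\cdot p^{n-2}\equiv p^{n-1}\pmod{n}$. It then identifies $p^{n-1}$ as the prime idempotent of $p$: square-freeness gives $\gcd(n,p^{n-1})=p$, so Lemma \ref{idp} applies. This is essentially your ``sanity check'' paragraph.

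You instead prove the explicit identity $p^{n-1}\equiv p^{\varphi(n)}\pmod{n}$ prime by prime, using the divisibility half of Korselt's criterion. That is more explicit and bypasses Lemma \ref{idp}, but it uses more than is needed. The paper's argument only needs $p^n\equiv p\pmod{n}$ for the prime divisors $p$, plus square-freeness. That is exactly why it carries over verbatim to SC numbers in Proposition \ref{semi}. Yours does not carry over, since SC numbers need not satisfy $(q-1)\mid(n-1)$ (e.g.\ $2501=41\cdot 61$).

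Your CRT argument can be freed from Korselt as well. For $q\neq p$, the congruence $p^n\equiv p\pmod{q}$ with $p$ invertible modulo $q$ already gives $p^{n-1}\equiv 1\pmod{q}$.

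One small slip: in your first paragraph, it is the right-hand side $p^{\varphi(n)}$, not the left-hand side, that is the prime idempotent by definition.
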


\begin{proof}
$\left(p^{n-1}\right)^2\equiv p^{2n-2}\equiv p^n\cdot p^{n-2}\equiv p\cdot p^{n-2}\equiv p^{n-1}\pmod{n},$ where we have used the fact that for a Carmichael number, we always have $b^n\equiv b\pmod{n}. $ On the other hand, it follows immediately from the definition and the fact that a Carmichael number is square-free that these idempotents are prime.
\end{proof}

\begin{cor}
The converse is not true, not even among square-free numbers. $65$ is the smallest square-free number satisfying $65=5\cdot 13$, and its prime idempotents are $5^{64}\equiv 40\pmod{65}$ and $13^{64}\equiv 26\pmod{65}.$
\end {cor}

In fact, for Proposition \ref{carm}, we do not need a number to be a Carmichael number, but rather to satisfy Fermat’s property at least for its divisors. This suggests the definition of numbers that satisfy a slightly more relaxed condition.

\begin{defn}
Let $n\in\mathbb{N}$ be a composite number; we say that $n$ is a Semi-Carmichael number (hereinafter referred to as SC number) if, for every divisor $d$ of $n$, we have $d^n\equiv d\pmod{n}.$
\end{defn}

Two trivial remarks are that every Carmichael number is also a SC number, and that for $n$ to be a SC number, it suffices that $d^n\equiv d\pmod{n}$ for the prime divisors of $n$. Other more interesting properties are:

\begin{pro}\label{semi}
Let $n$ be a SC number.
    \begin{enumerate}
        \item $n$ is square-free.
        \item Its prime idempotents are the numbers of the form $p^{n-1}$, where $p$ is a prime divisor of $n$, and the converse is also true if we require that $n$ be composite.
        \item $G_{n-1}(n)\equiv G_{\varphi (n)}(n)\pmod{n}$.
    \end{enumerate}
\end{pro}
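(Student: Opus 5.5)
The plan is to treat the three items in order, working throughout with the Chinese Remainder Theorem one prime at a time. The basic input is that for a prime divisor $p$ of an SC number $n$ we have $p^n\equiv p\pmod{n}$, and we read this congruence modulo each prime power dividing $n$.

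\emph{Item 1.} Suppose $p^2\mid n$ for some prime $p$. Reducing $p^n\equiv p\pmod n$ modulo $p^2$ gives $0\equiv p\pmod{p^2}$, because $n\geq 2$. This is absurd, so $n$ is square-free.

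\emph{Item 2, direct implication.} First, $p^{n-1}$ is idempotent by exactly the computation in the proof of Proposition \ref{carm}:
$$p^{2n-2}=p^n\cdot p^{n-2}\equiv p\cdot p^{n-2}=p^{n-1}\pmod n.$$
This uses only the SC property for the divisor $p$, and is valid since $n\ge 4$ for composite $n$.

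To see that it is the prime idempotent $p^{\varphi(n)}$, write $n=p_1\cdots p_s$ (square-free by item 1) and compare the two numbers modulo each $q=p_j$.
\begin{itemize}
\item Modulo $p$, both $p^{n-1}$ and $p^{\varphi(n)}$ are $0$.
\item Modulo $q\neq p$, the congruence $p^n\equiv p\pmod q$ together with the invertibility of $p$ modulo $q$ gives $p^{n-1}\equiv 1\pmod q$. Euler's Theorem (\ref{euler}) gives $p^{\varphi(n)}\equiv 1\pmod q$.
\end{itemize}
By the uniqueness in the Chinese Remainder Theorem, $p^{n-1}\equiv p^{\varphi(n)}\pmod n$. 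Equivalently, Lemma \ref{idp} applies, since $\gcd(n,p^{n-1})=p$.

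\emph{Item 2, converse.} Let $n$ be composite and square-free (the standing assumption of this part of the paper), and suppose $p^{n-1}\equiv p^{\varphi(n)}\pmod n$ for every prime $p\mid n$. Modulo $q\neq p$ this gives $p^{n-1}\equiv 1$, hence $p^n\equiv p\pmod q$. Modulo $p$ we trivially have $p^n\equiv 0\equiv p$. The Chinese Remainder Theorem then yields $p^n\equiv p\pmod n$. Since every divisor $d$ of $n$ is a product of such primes, multiplying the congruences gives $d^n\equiv d\pmod n$, so $n$ is SC.

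The main obstacle is the square-free hypothesis in this converse. If $p^2\mid n$, then $p\cdot p^{\varphi(n)}\not\equiv p\pmod{p^2}$, so the converse genuinely needs $n$ square-free, or else an argument that the hypothesis forces it. I would state the converse explicitly within the square-free setting.

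\emph{Item 3.} Since $n$ is square-free, the product formula gives
$$G_k(n)=\prod_{j}\left(p_j^k+p_j-1\right).$$
By item 2, $p_j^{n-1}\equiv p_j^{\varphi(n)}\pmod n$ for each $j$. Substituting factor by factor, the two products $G_{n-1}(n)$ and $G_{\varphi(n)}(n)$ are congruent modulo $n$. This route is simpler than the Korselt-based argument used for Carmichael numbers.
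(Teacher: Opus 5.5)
Your proof is correct and follows the paper's argument in all three items: reduction modulo $p^2$; idempotence of $p^{n-1}$ via $p^{2n-2}=p^n p^{n-2}$ and its identification with $p^{\varphi(n)}$ (which you do prime by prime by CRT rather than through $\gcd(n,p^{n-1})=p$ and Lemma \ref{idp}); cancellation modulo $q\neq p$ for the converse; and factorwise substitution in $\prod_j(p_j^k+p_j-1)$. Restricting the converse explicitly to square-free $n$ is warranted rather than a weakness: the paper's step from $p^n\equiv p\pmod p$ to $p^n\equiv p\pmod n$ tacitly needs $p\,\|\,n$, and $n=4$ shows the converse fails without it, since $2^{\varphi(4)}\equiv 2^{3}\equiv 0\pmod 4$ but $2^4\not\equiv 2\pmod 4$.
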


\begin{proof}
To prove that $n$ is square-free, suppose that $n=p^rm$ with $r>1$ and $\gcd(m,p)=1$. By the SC number definition, $p^n \equiv p \pmod{n}$ and, since $r>1$, $p^n \equiv p \pmod{p^2}$. Since $n$ is composite and therefore greater than or equal to $2$, we must have $p \equiv 0 \pmod{p^2}$, which is impossible. Therefore, $r = 1$ for every prime divisor $p$ of $n$.

For the second part, given a SC number $n$, we have, as we already showed, that

$$\left( p^{n-1}\right)^2\equiv p^{2n-2}\equiv p^n\cdot p^{n-2}\equiv p^{n-1}\pmod{n}$$

and, since $n$ is square-free, $\gcd(n,p^{n-1})=p$ and therefore $p^{n-1}$ is a prime idempotent.

For the converse, let $n\in\mathbb{N}$ such that its prime idempotents are of the form $p^{n-1}$, where $p$ is a prime divisor of $n$. Then

$$\left(p^{n-1}\right)^2\equiv p^{2n-2}\equiv p^{n-1}\pmod{n},$$

and this implies that

$$p^n\cdot p^{n-1}\equiv p^n\pmod{n}.$$

If we consider a divisor $q$ of $n$ coprime with $p$, then $p^{n-1}$ has an inverse in $Z_q$ and we can state that

$$p^{n-1}\equiv 1\pmod{q}\implies p^n\equiv p\pmod{q},$$

and since it is trivial that $p^n\equiv p\pmod{p}$, using the Chinese Remainder Theorem we have that $p^n\equiv p\pmod{n}$ for every prime divisor $p$ of $n$ and, therefore, since $n$ is composite, it is also an SC number.

For the last statement, it suffices to use the previous section and Lemma \ref{idp} to show that

$$G_{n-1}(n)\equiv \prod_{e\in I_p(n)}{e+\gcd(e,n)-1}\equiv G_{\varphi(n)}(n)\pmod{n}.$$
\end{proof}

The converse of this last statement is not true; for example, $645 = 3 \cdot 5 \cdot 43$ satisfies $G_{n-1}(n) \equiv G_{\varphi(n)}(n) \pmod{n}$ and is not a SC number (although it does satisfy $43^{645} \equiv 43 \pmod{645}$). SC numbers do not satisfy even a partial version of Korselt’s criterion, for although many of them have prime divisors such that $(p-1) \mid (n-1)$, and in many cases all but one of their divisors satisfy this condition, the number $2501=41 \cdot 61$ is a SC number but $40$ and $60$ doesn't divide $2500$.

The first SC number is $65$ and the following ones are:

$$133,301,561,793,1105,1729,2041,2413,2465\ldots$$

The first SC numbers that have at least three prime factors and are not Carmichael numbers are:

$$7107,89281,91001,139897,176149,204001\ldots$$

\begin{cor}\label{corguigacarmichael}
Let $n\in\mathbb{N}$ be a composite number; $n$ is a Giuga and SC number if and only if for every prime divisor $p$ of $n$, 
    
$$p^{n-1}\equiv1-\frac{n}{p}\pmod{n}.$$
   
\end{cor}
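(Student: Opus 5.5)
The plan is to combine the two idempotent characterizations already proved. By the theorem characterizing Giuga numbers through prime idempotents, $n$ is Giuga if and only if the prime idempotent attached to each $p\mid n$ is $1-\frac{n}{p}$. By Proposition \ref{semi}, $n$ is SC if and only if (for composite $n$) the prime idempotents are exactly the $p^{n-1}$. Intersecting these descriptions should give the congruence. What needs care is that each prime idempotent is tied to a specific prime $p$, so the identification must be made prime by prime, not just as sets.

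\emph{Forward direction.} Assume $n$ is Giuga and SC, so in particular square-free. By Lemma \ref{idp}, the prime idempotent associated with $p$ is the unique idempotent $e$ with $\gcd(n,e)=p$, namely $p^{\varphi(n)}$. The proof of the Giuga theorem shows $\gcd(n,1-\frac{n}{p})=p$, and the proof of Proposition \ref{semi} shows $\gcd(n,p^{n-1})=p$. Both are idempotents, so uniqueness gives $p^{n-1}\equiv p^{\varphi(n)}\equiv 1-\frac{n}{p}\pmod n$.

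\emph{Converse.} Assume $p^{n-1}\equiv 1-\frac{n}{p}\pmod n$ for every prime $p\mid n$. The main obstacle is that we do not yet know $n$ is square-free, or that $1-\frac{n}{p}$ is idempotent. I would argue directly modulo each prime power $q^{r}\,\|\, n$.

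\begin{itemize}
\item \emph{Case $q\neq p$.} Here $\frac{n}{p}\equiv 0\pmod{q^r}$, so $p^{n-1}\equiv 1\pmod{q^r}$.
\item \emph{Case $q=p$.} We get $p^{n-1}\equiv 1-\frac{n}{p}\pmod{p^r}$. If $r\ge2$, then $p\mid \frac{n}{p}$ and the left side is divisible by $p$ (since $n-1\ge1$). Reducing mod $p$ gives $0\equiv 1\pmod p$, a contradiction.
\end{itemize}

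Hence $n$ is square-free, and $p\,\|\, n$.

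Reducing the hypothesis mod $p$ now gives $0\equiv 1-\frac{n}{p}\pmod p$, that is $p\mid \frac{n}{p}-1$. This holds for every $p$, so $n$ is Giuga by Borwein's criterion. Multiplying the hypothesis by $p$ gives $p^n\equiv p-n\equiv p\pmod n$ for all prime divisors $p$. By the remark after the SC definition, this suffices for $n$ (composite) to be SC.

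Alternatively, once square-freeness and the Giuga property are established, one can invoke the converse in Proposition \ref{semi}: $1-\frac{n}{p}$ is then the prime idempotent for $p$, so the prime idempotents are the $p^{n-1}$, and $n$ is SC. I expect the only delicate step to be ruling out repeated prime factors in the converse; the rest is bookkeeping with the Chinese Remainder Theorem.
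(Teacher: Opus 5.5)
Your proof is correct. The forward direction is the paper's argument with one gap filled: the paper just says both $1-\frac{n}{p}$ and $p^{n-1}$ are ``the'' prime idempotent attached to $p$, whereas you justify uniqueness through Lemma~\ref{idp}, since an idempotent $e$ with $\gcd(n,e)=p$ must be $\equiv p^{\varphi(n)}\pmod n$.

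The converse takes a different route to the Giuga property. The paper computes
\[
\left(1-\tfrac{n}{p}\right)^2\equiv p^{n-1}\left(1-\tfrac{n}{p}\right)\equiv p^{n-1}-np^{n-2}\equiv 1-\tfrac{n}{p}\pmod n,
\]
concludes that $\frac{n}{p}$ is idempotent, and invokes Corollary~\ref{giugacor}. The proof of that corollary yields $p^2\mid n-p$, and square-freeness comes out as a by-product. You instead reduce the hypothesis modulo $p$ and read off $p\mid\frac{n}{p}-1$ directly. Your route is more elementary and does not depend on the idempotent characterization of Giuga numbers. The paper's route keeps the argument inside its idempotent framework, which is the point of the section.

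Your separate square-freeness step is redundant. The congruence $p\mid\frac{n}{p}-1$ already forces $p\nmid\frac{n}{p}$, that is $p^2\nmid n$, and your $r\ge 2$ case is this same computation. The case $q\neq p$ is never used afterwards. So the ``main obstacle'' you anticipated disappears once you reduce modulo $p$.

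The SC part is identical to the paper's: you show $p^n\equiv p-n\equiv p\pmod n$ for each prime $p\mid n$, then extend multiplicatively to all divisors.
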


\begin{proof}
The implication from left to right follows immediately from what we have seen so far, since for every prime divisor $p$ of $n$, its associated prime idempotent is, on the one hand, $1-\frac{n}{p}$ because it is a Giuga number divisor, and also $p^{n-1}$ because it is a SC number divisor; therefore, both values are equal modulo $n$.

Regarding the converse implication, given $p$ a prime divisor of $n$,
    
$$\left(p^{n-1}\right)\left(1-\frac{n}{p}\right)\equiv p^{n-1}-np^{n-2}\equiv p^{n-1}\pmod{n},$$

where $n-2\geq1$ since $n$ is composite, so $1-\frac{n}{p}$ is an idempotent for every prime divisor $p$ of $n$. By Corollary \ref{giugacor}, $n$ is a Giuga number and, in particular, is square-free. On the other hand,

$$p\cdot p^{n-1}\equiv p^n\equiv p-n\equiv p\pmod{n},$$

and since $n$ is composite and this holds for every prime divisor of $n$, then $n$ is a SC number, although this is also immediate since we have shown in \ref{semi} that every composite number whose prime idempotents are of the form $p^{n-1}$ is a SC number, and these idempotents are prime since $n$ is Giuga and consequently square-free, and $\gcd(n,p^{n-1})=p$.

\end{proof}

We have therefore defined an alternative property that a number $n$ must satisfy in order to be a counterexample to the Agoh–Giuga conjecture; thus, it would suffice to show that no number satisfies the congruence \ref{corguigacarmichael} for all its prime divisors in order to prove this conjecture.

\bibliographystyle{plain}
\bibliography{bibliography}

\end{document}